\documentclass[final]{siamltex}
\usepackage{xcolor}
\usepackage{amsmath}
\usepackage{amssymb}
\usepackage{mathabx}
\usepackage{mathrsfs}
\usepackage{graphicx}
\usepackage{bm}
\usepackage{ucs}
\usepackage[utf8x]{inputenc}
\usepackage{wrapfig}
\usepackage{color}
\usepackage{float}
 \usepackage{algorithm,algorithmic}
\usepackage[breaklinks,colorlinks=true,linkcolor=blue,citecolor=red, backref=page]{hyperref}

\newcommand{\cF}{\hat{\mathcal{F}}}

\newcommand{\rc}{\textcolor{black}}

\renewcommand{\tilde}{\widetilde}

\DeclareMathAlphabet{\itbf}{OML}{cmm}{b}{it}

\DeclareMathAlphabet{\itbf}{OML}{cmm}{b}{it}
 \DeclareMathAlphabet\mathbfcal{OMS}{cmsy}{b}{n}

\newtheorem{prop}[theorem]{Proposition}
\renewcommand{\hat}{\widehat}
\def\EE{\mathbb{E}}
\def\CC{\mathbb{C}}
\def\RR{\mathbb{R}}
\def\eps{\varepsilon}
\def\om{\omega}

\def\bkappa{{\boldsymbol{\kappa}}}
\def\bzeta{{\boldsymbol{\zeta}}}
\def\bxi{{\boldsymbol{\xi}}}
\newcommand{\vka}{\vec{\bkappa}}

\def\bx{{\itbf x}}
\def\vx{\vec{\bx}}
\def\by{{\itbf y}}

\def\bu{\boldsymbol{\eta}}

\def\bq{{\itbf q}}

\def\bX{{\itbf X}}
\def\obX{{\itbf X}}
\def\tbX{{\itbf Y}}

\def\tbq{\tilde{\bq}}

\def\cC{\mathcal{C}}
\def\cW{\mathcal{W}}

%---------------
\begin{document}

%---------------
\title{Paraxial wave propagation in random media with long-range correlations}

\author{
Liliana Borcea\footnotemark[1]
\and
Josselin Garnier\footnotemark[2]
\and 
Knut S{\O}lna\footnotemark[3]
}

\maketitle

\renewcommand{\thefootnote}{\fnsymbol{footnote}}

\footnotetext[1]{Department of Mathematics, University of Michigan,
  Ann Arbor, MI 48109. {\tt borcea@umich.edu}}
\footnotetext[2]{Centre de Math\'ematiques Appliqu\'ees, Ecole
  Polytechnique, Institut Polytechnique de Paris, 91128 Palaiseau Cedex, France.  {\tt
    josselin.garnier@polytechnique.edu}}
\footnotetext[3]{Department of Mathematics,
University of California at Irvine,
Irvine, CA 92697. {\tt ksolna@math.uci.edu}}

\begin{abstract}
We study the paraxial wave equation with a randomly perturbed index of refraction, which can model the  propagation of a wave beam in a turbulent medium. The random perturbation is  a  stationary and isotropic  process with a general form of the covariance that may be integrable or not.  We focus attention mostly on the non-integrable  case, which corresponds to a random perturbation with long-range correlations, that is relevant for propagation through a cloudy turbulent atmosphere. The analysis is carried out in a high-frequency regime where the forward scattering approximation holds. It reveals that the 
randomization of the wave field is multiscale: The travel  time of the wave front is randomized at short distances of propagation and it can be described by a fractional Brownian motion. The wave field observed  in the  random travel time frame is affected by the random perturbations at long distances,  and it is described by a  Schr\"{o}dinger-type equation driven by a standard Brownian field. We use these results to quantify how scattering leads to decorrelation of the spatial and spectral components of the wave field  and to a deformation of the pulse emitted by the source. These are important questions for applications like imaging and free space communications with pulsed laser beams  through a turbulent atmosphere.  We also  compare the results with those used in the optics literature, which are based on  the Kolmogorov model of turbulence.

 \end{abstract}

\begin{keywords}
Paraxial wave equation, turbulent atmosphere,  asymptotic analysis, long-range correlations.
\end{keywords}

\begin{AMS}
76B15, 35Q94, 60F05.
\end{AMS}

\section{Introduction}
\label{sect:Intro}
The paraxial wave equation describes wave propagation along a privileged axis, as a narrow angle beam, in a homogeneous or heterogeneous medium
\cite{bamberger1988parabolic}. It is a  parabolic approximation of the wave equation, which neglects backscattering and thus facilitates  the analysis and computation of waves at long distance of propagation, aka range. The parabolic  approximation theory was introduced by Leontovich and Fock \cite{leontovich1946solution} and has been used and developed further in applied fields like seismology \cite{claerbout1970coarse,claerbout1976fundamentals}, underwater acoustics \cite{tappert1977parabolic}, optics \cite{hasegawa1973transmission} and laser optics \cite{andrews2005laser,ishimaru,tatarskii,young1996two}.  

Motivated by laser optics applications to imaging and free space  communications through a turbulent atmosphere, we consider the paraxial wave equation with a randomly perturbed wave speed $c(\vx)$. The model of the perturbation  is 
\begin{equation}
\frac{c_o^2}{c^2(\vx)} = 1 + 
%\sigma 
\mu(\vx),
\label{eq:In1}
\end{equation}
where $c_o$ is the constant reference speed
%$0 < \sigma \ll 1$ is the perturbation parameter  
and $\mu$ is a zero-mean, stationary and isotropic random process, with power spectral density (Fourier transform of the covariance)  of the form
\begin{equation}
{\mathbb{S}}(\vka) =  \int_{\RR^3} d \vx \, \EE \left[ \mu(\vx') \mu(\vx' + \vx)\right] e^{- i \vka \cdot \vx} = 
\chi_\alpha {\bf 1}_{(L_o^{-1},l_o^{-1})}(|\vka|) |\vec\bkappa|^{-2-\alpha}.
\label{def:psdS}
\end{equation}
Here $\chi_\alpha$ is a constant \rc{(expressed in unit of length to the power $1-\alpha$)},  $\alpha \in (0,1) \cup (1,2)$ and ${\bf 1}_{(L_o^{-1},l_o^{-1})}$ is the indicator function equal to one when  its argument is in $ (L_o^{-1},l_o^{-1})$ and 
zero otherwise. 

Definition \eqref{def:psdS} is a generalization of the commonly used Kolmogorov power spectrum, where $\alpha = 5/3$ and the ``outer scale" $L_o$ 
and the ``inner scale" $l_o$ define the ``inertial range" of turbulence \cite{andrews2005laser}. There is a growing number of  studies in the optics literature concerned with quantifying the effect of non-Kolmogorov turbulence on beam propagation \cite{charnotskii2012intensity,korotkova2021non,zilberman2008propagation}. All of them consider 
$\alpha >1$, which corresponds to an 
integrable covariance of $\mu$. This case is well understood from the mathematical point of view and has been 
analyzed in detail in the high-frequency, paraxial regime %with properly scaled perturbation parameter  $\sigma$ 
 in  \cite{fannjiang2004scaling,garnier2009coupled}. 
 The  wave field is described asymptotically by the solution of an It\^{o}-Schr\"{o}dinger equation driven by a Brownian field with covariance defined in terms of  $\mathbb{S}(\vka)$. Therefore, the second and even fourth order statistical moments of the wave field can be calculated using It\^{o} calculus \cite{garnier2016fourth}. The study of such moments is an 
essential part of  both the analysis and the development of new methodologies  
for imaging \cite{borcea2021imaging,borcea2007asymptotics,garnier2018imaging}, time reversal  \cite{blomgren2002super,fannjiang05atmos,gs22,papanicolaou2007self} and optical communications applications \cite{borcea2020multimode}.

The case $\alpha \in (0,1)$ has not been explored in the optics literature and it is interesting mathematically because depending on the outer scale $L_o$,
it may give a non-integrable covariance of the fluctuations, meaning that $\mu$ has long-range correlations. Moreover, $\alpha  <1$  is relevant for propagation through a cloudy atmosphere, as seen from the experimental studies \cite{cahalan1989marine} and \cite[Table 3]{madhavan2017multiresolution}. The conclusion of these studies is that the value of $\alpha$ depends on the interval $(L_o^{-1},l_o^{-1})$, with $\alpha < 1$ at length scales that are larger than the outer scale of  
Kolmogorov turbulence. Thus, one could consider an even more general model of the power spectrum, with $\alpha < 1$ at longer scales and 
$\alpha > 1$ at smaller scales. For brevity, we work with the model \eqref{def:psdS}, which is sufficient to display the effects of long range medium fluctuations  on the statistics 
of the wave beam. 

{Most of our analysis is concerned with  $\alpha \in (0,1)$ and a beam with initial radius of order $r_{\rm s}$, satisfying  $l_o \lesssim r_{\rm s} \ll L_o$,  so we can take  $L_o \to \infty$, while keeping $l_o$ finite. The covariance of $\mu$ is non-integrable in this case, which means that the classic paraxial theory in \cite{fannjiang2004scaling,garnier2009coupled} does not apply. 
We refer   to \cite{gomez2017fractional} for the derivation of the  paraxial approximation in a random anisotropic medium with 
 long-range correlation properties. There, the  wave  is described asymptotically by the solution of a  Schr\"{o}dinger equation with fractional white noise potential. In this paper we show that for our isotropic random  medium modeled by $\mu$, a  transformation involving the central axis  travel time (i.e., the travel time measured at the center of the beam), can convert  the problem to one where the classic analytic framework applies. We prove that there are two distinguished range scales that 
describe the net scattering effects  on the beam: The central axis travel time randomizes  on a small range scale and it is described by a fractional Brownian motion. }This behavior was also shown in \cite{bal2011asymptotics,solna03fract}. The shape  of the wave, observed in the random travel time frame, is not 
affected by scattering at this short range. However, this, too, randomizes at long range and it is described by the solution of an   It\^{o}-Schr\"{o}dinger equation driven by a standard Brownian field, like in \cite{fannjiang2004scaling,garnier2009coupled}. 
We use these asymptotic results to analyze explicitly the 
spatial and frequency covariance of the wave field.  This allows us to quantify how the wave components decorrelate 
and how the pulse emitted by the source  deforms  due to scattering in the random medium.

To relate our results with the existing optics literature, we also consider briefly the case 
$\alpha \in (0,1) \cup (1,2)$ with a finite $L_o$. %  i.e., a wider beam with radius  satisfying $l_o \lesssim r \lesssim L_o$.
%as well as  $\alpha \in (1,2)$. 
These cases correspond to an integrable covariance of the process $\mu$, where the theory in \cite{fannjiang2004scaling,garnier2009coupled} applies. We study  the covariance  of the wave field, which  depends on $\alpha$ and the scales $l_o$ and $L_o$,  and 
 quantify explicitly the accuracy of the  approximations commonly used in optics literature \cite{andrews2005laser}.

The paper is organized as follows: We begin in section \ref{sect:Formulation} with the mathematical formulation of the problem. We state the paraxial 
wave equation, identify the asymptotic regime and give more details on the random process $\mu$. The asymptotic analysis 
for the case $\alpha \in (0,1)$, with $L_o \to \infty$ and finite $l_o$ is given in section \ref{sect:Asymptotic}. We use it in section \ref{sect:Application} to 
quantify the decorrelation of the wave components and the deformation of the pulse due to scattering. The comparison 
with the formulas in the optics literature are in section \ref{sect:Comparison}. We end with a summary in section \ref{sect:Summary}.

\section{Mathematical formulation} 
\label{sect:Formulation}
Let us introduce the orthogonal system of coordinates $\vx = (\bx,z)$, with range axis $z$ along the direction of propagation and with $\bx \in \RR^2$ 
in the cross-range plane. The wave field $u$ satisfies the wave equation
\begin{align}
\left[\frac{1}{c^2(\bx,z)}\partial_t^2 - \Delta_\bx - \partial_z^2 \right] u(t,\bx,z)  &= 
\partial_t \big[ 2 \cos(\om_o t)  f(Bt) \big] S\Big(\frac{\bx}{r_{\rm s}}\Big) \delta (z), \label{eq:WEq} 
\end{align}
for $(t,\bx,z) \in \RR \times \RR^2 \times \RR$, where $\Delta_\bx$ denotes the 
Laplacian with respect to $\bx$. The source  is localized at the origin of range and has a  cross-range profile  with radius $r_{\rm s}$, modeled by the function $S$ of 
dimensionless argument, with support centered at ${\bf 0}$.  The source signal is a pulse with bandwidth $B$, modulated at the carrier (center) frequency $\om_o$ and with envelope modeled by the   function $f$ of dimensionless argument.  Prior to the source excitation 
there is no wave:   
$u(t,\bx,z) \equiv 0,$ for $ t \ll -1/B.$

 Since the analysis 
of wave propagation requires the decomposition of the wave field over frequencies, we work henceforth in the Fourier domain,  
\begin{equation}
\hat u(\om,\bx,z) = \int_{-\infty}^\infty d t \,  e^{i \om t} u(t,\bx,z).
\label{eq:WFT}
\end{equation}
This time-harmonic wave satisfies the Helmholtz equation
\begin{align}
&
\left[\frac{\om^2}{c^2(\bx,z)} +\Delta_\bx +\partial_z^2 \right] \hat u(\om,\bx,z)  = 
%- 
i\omega 
\hat F(\om,\bx) \delta (z)  , \label{eq:WFT1} 
\end{align}
for $(\om,\bx,z) \in \RR \times \RR^2\times \RR$, with 
\begin{align}
\hat F(\om,\bx) = \frac{1}{B}
\Big[  \hat f\Big(\frac{\om-\om_o}{B}\Big) +  \hat f\Big(\frac{\om+\om_o}{B}\Big) \Big]
S\Big(\frac{\bx}{r_{\rm s}}\Big),
\label{def:hatF}
\end{align}
and  outgoing boundary conditions at $|(\bx,z)| \to \infty$. These conditions can be justified mathematically by truncating the random medium 
outside a ball of large enough radius, so that in the time domain, the truncation does not affect the wave over the duration  of interest.

We state next, in subsection \ref{sect:Scaling},  the paraxial approximation of  equation \eqref{eq:WFT1} and the asymptotic regime where it is valid. The details on the random process $\mu$ are in subsection \ref{sect:randmodel}. 

\subsection{Scaling and the paraxial equation}
\label{sect:Scaling}

The paraxial approximation holds  in a high-frequency regime, where the wavelength 
is much smaller than the radius of the beam and the correlation radius of the medium, which are,  in turn, much smaller than the range scale (distance of propagation).

We introduce the small dimensionless parameter $\eps>0$ that encapsulates this regime and assume that, compared to the typical range, the typical wavelength is of order $\eps^4$ and the beam radius and the correlation radius are of order $\eps^2$:
\begin{equation}
B^\eps = \frac{B}{\eps^4},\quad
\om_o^\eps = \frac{\om_o}{\eps^4},\quad
r_{\rm s}^\eps = \eps^2 r_{\rm s},\quad
l_o^\eps = \eps^2 l_o,\quad 
L_o^\eps = \eps^2 L_o,\quad 
\chi_\alpha^\eps = \chi_\alpha \eps^{8-2\alpha} .
\label{eq:scaling}
\end{equation}
 As we will see, the scaling of $\chi_\alpha^\eps$ is the one that gives a non-trivial limit as $\eps \to 0$. It is also possible to consider a larger range scale $L_o^\eps =\eps^p L_o $, with $p<2$, and/or a smaller $l_o^\eps = \eps^q l_o$, with $q>2$ 
\cite{fannjiang2004scaling}.
Here we  consider the scaling (\ref{eq:scaling}) and study the subsequent limits $L_o \to+\infty$ and/or $l_o \to 0$.

We denote by $\mu^\eps$ a random process with the power spectral density of the form (\ref{def:psdS}) with the constant $\chi_\alpha^\eps$ and scales $l_o^\eps$, $L_o^\eps$.
Then,   (\ref{eq:scaling})
gives the representation
\begin{equation}
 \label{eq:F2}
\mu^\eps(\vx) =  \eps^3 \mu \Big(\frac{\vx}{\eps^2}\Big),
\end{equation}
where $\mu$ is a random process with the power spectral density of the form (\ref{def:psdS}) with the constant $\chi_\alpha$ and scales $l_o$, $L_o$.
{The wave field in the scaling \eqref{eq:scaling} is denoted by $\hat u^\eps$ and it satisfies the following Helmholtz equation derived from (\ref{eq:WFT1})}
\begin{align}
&\left[\frac{\om^2}{c_o^2}[1+\mu^\eps (\bx,z)]+\Delta_\bx +\partial_z^2 \right] \hat u^\eps(\om,\bx,z)  =
 %-
 i \omega
 \hat F^\eps(\om,\bx) \delta (z)  ,
\label{eq:WFT1sca}
\end{align}
with
\begin{align}
& \hat F^\eps (\om,\bx) =\frac{1}{B^\eps} \Big[ \hat f\Big(\frac{\om-\om_o^\eps}{B^\eps}\Big) +  \hat f\Big(\frac{\om+\om_o^\eps}{B^\eps}\Big) \Big]
 S\Big(\frac{\bx}{r_{\rm s}^\eps}\Big)
 = \eps^4 \hat F\Big(\eps^4 \om , \frac{\bx}{\eps^2}\Big),
 \label{eq:WFT1scaF}
\end{align}
{and outgoing boundary conditions at $|(\bx,z)| \to \infty$.}

{Observe that if we had $S\equiv 1$ and $\mu \equiv 0$ in (\ref{eq:WFT1sca}-\ref{eq:WFT1scaF}), the solution would be the plane wave
$$
\hat u^\eps\big(  \om , \bx, z \big) = 
 \frac{c_o \eps^4}{2}\exp\Big(i \frac{\om}{c_o} z \Big) \frac{1}{B} \Big[ \hat f\Big(\frac{\eps^4 \om-\om_o}{B}\Big) +  \hat f\Big(\frac{\eps^4 \om+\om_o}{B}\Big) \Big].
$$
This observation motivates the introduction of the ``slowly varying envelope field" $\varphi^\eps$, 
which defines the solution of (\ref{eq:WFT1sca}-\ref{eq:WFT1scaF}) as follows
\begin{equation}
\label{eq:sve}
\hat u^\eps\big( \om  ,\bx,z\big)
=\
\frac{c_o \eps^4}{2}
\exp \Big( i \frac{\om}{c_o} z \Big) \varphi^\eps \Big(\eps^4 \om, \frac{\bx}{\eps^2}, z\Big).
\end{equation}
Substituting (\ref{eq:sve})  into (\ref{eq:WFT1sca}),
using the chain rule
and denoting $k(\varOmega)=\varOmega/c_o$, 
we find that for $\varOmega=\eps^4 \om \in \RR$ and $\bX=\bx/\eps^2\in \RR^2$, we have 
\begin{align}
\left[ 2 i k(\varOmega) \partial_z + \Delta_{\bX} + \frac{k^2(\varOmega)}{\eps} \mu \Big(\bX ,\frac{z}{\eps^2} \Big) \right]  \varphi^\eps (\varOmega, \bX ,z) &= 0, \quad z > 0,  
 \label{eq:Parax}\\
\varphi^\eps(\varOmega,\bX,z=0)  &=   \hat F (\varOmega, \bX ).
 \label{eq:ParaxIC}
% \label{eq:defSource}
\end{align}
In equation (\ref{eq:Parax}) we have neglected the  $\eps^4 \partial_z^2 \varphi^\eps$ term, which is responsible for  backscattering. 
Thus, we use the forward scattering approximation, which  can be justified when $\eps \to 0$  as shown in \cite{garnier2009coupled}.
}

\subsection{Statistics of the random fluctuations}
\label{sect:randmodel}
The most convenient choice for the analysis would be having a Gaussian $\mu$. However, since Gaussian processes are unbounded,  this choice is inconsistent with equation \eqref{eq:In1}, whose right hand side must be positive. We assume instead that $\mu$ is defined by a smooth and bounded function of a Gaussian process, which averages to zero  so  that $\EE[\mu] = 0$.  This gives a consistent random perturbation  model, while keeping the analysis simple enough.

The covariance of  $\mu$  is the inverse Fourier transform of the power spectrum  \eqref{def:psdS}
\begin{align}
\mbox{Cov}_\mu(\bX,z) = &\EE\big[ \mu(\bX',z') \mu(\bX'+\bX,z'+z) \big] = \frac{1}{(2 \pi)^3} \int_{\RR^3} d \vka \, \cos \big[\vka \cdot (\bX,z)\big] \mathbb{S}(\vka) \nonumber \\
&\qquad = \frac{\chi_\alpha}{(2 \pi)^3} \int_{L_o^{-1}}^{l_o^{-1}} d \kappa \, \kappa^{2} \int_0^{2 \pi} d \varphi \int_{0}^\pi d \vartheta \sin \vartheta\,  \kappa^{-2-\alpha} 
\cos[\kappa |(\bX,z)| \cos \vartheta] \nonumber \\
&\qquad = \frac{\chi_\alpha}{2 \pi^2} \int_{L_o^{-1}}^{l_o^{-1}} d \kappa \, \kappa^{-\alpha} \mbox{sinc} [ \kappa |(\bX,z)| ] \nonumber \\
&\qquad = \frac{\chi_\alpha |(\bX,z)|^{\alpha-1}}{2 \pi^2} \int_{|(\bX,z)|/L_o}^{|(\bX,z)|/l_o} ds \, s^{-\alpha} \mbox{sinc}(s). \label{eq:ST1}
\end{align}
Here we introduced the spherical coordinates $\vka \mapsto (\kappa, \varphi, \vartheta)$, with $\kappa = |\vka|$ and angles $\varphi \in (0, 2 \pi)$ and 
$\vartheta \in (0,\pi)$. We also changed the variable of integration to $s = \kappa |(\bX,z)|$. 
The variance of $\mu$ is obtained from equation \eqref{eq:ST1} evaluated at the origin,
\begin{align}
\mbox{Var}_\mu = \EE\big[ \mu^2(\bX,z) \big] &=  \frac{\chi_\alpha}{2 \pi^2} \int_{L_o^{-1}}^{l_o^{-1}} d \kappa \, \kappa^{-\alpha}  = \frac{\chi_\alpha}{2 \pi^2} \left(\frac{L_o^{\alpha-1}-l_o^{\alpha-1}}{\alpha-1} \right).
\label{eq:ST2}
\end{align}

We distinguish the following  two cases   in the paper. The first one is used in the analysis in sections \ref{sect:Asymptotic} and \ref{sect:Application}, while the other one is  used for comparison with the optics literature in section \ref{sect:Comparison}.

\vspace{0.1in}
$\bullet$ \textbf{$\alpha \in (0,1)$ and infinite outer scale:}
When  the initial radius $r_{\rm s}$ of the beam  satisfies the order relation  $l_o \lesssim r_{\rm s} \ll L_o$, we can carry out the analysis in the  limit  $L_o \to \infty$, while keeping $l_o$ finite.  The variance \eqref{eq:ST2} is finite in this limit
\begin{equation}
\mbox{Var}_\mu =  \frac{\chi_\alpha}{2 \pi^2(1-\alpha) l_o^{1-\alpha}}, \quad \alpha \in (0,1), ~~ L_o \to \infty, \label{eq:ST3}
\end{equation}
but the covariance \eqref{eq:ST1} is not integrable. In particular, we obtain from \eqref{eq:ST1} that 
\begin{align}
\mbox{Cov}_\mu({\bf 0},z)&= \frac{\chi_\alpha |z|^{\alpha-1}}{2 \pi^2} \int_{0}^{|z|/l_o} ds \, s^{-\alpha} \mbox{sinc}(s) \sim  \frac{C_\alpha}{2 \pi^2} |z|^{\alpha-1}, \quad \mbox{as} ~ |z| \to \infty,
\label{eq:ST4}
\end{align}
where the symbol ``$\sim$" denotes an asymptotic expansion and, according to \cite[Formula 3.761.4]{grad}, 
\begin{equation}
C_\alpha = \chi_\alpha \int_0^\infty ds \, s^{-\alpha} \mbox{sinc}(s) = \frac{\pi \chi_\alpha}{2 \cos(\alpha \pi/2) \Gamma(1+\alpha)}.
\label{eq:ST5}
\end{equation}
The slow decay at $|z| \to \infty$  in \eqref{eq:ST4} implies that $\mbox{Cov}_\mu$  is non-integrable and we say that   the process $\mu$ has long-range 
correlations. 
%\begin{figure}[t]
%\begin{center}
%\begin{tabular}{c}
%\includegraphics[width=5cm]{}
%\end{tabular}
%\end{center}
%\caption{Plot of $\mbox{Cov}_\mu({\bf 0},z)$ vs. $z/l_o$ for two values of $\alpha:$ The top lines are for $\alpha = 0.8$ and the 
%bottom ones for $\alpha = 0.2$. 
% The blue lines show the exact $\mbox{Cov}_\mu({\bf 0},z)$ and the red lines show its asymptotic expansion (right hand side in \eqref{eq:ST4}).  A larger $\alpha$ gives a heavier tail \rc{while a positive $l_o$ gives a quadratic behavior at the origin, corresponding to smooth long-range medium fluctuations. EXPLAIN?? ALSO $z$ in the axis has the wrong font. } 
%    \label{fig1}
%}
%\end{figure}

\vspace{0.1in}
$\bullet$ \textbf{$\alpha \in (0,1) \cup (1,2)$ and a finite outer scale:} When the beam has a larger radius, meaning that $l_o \lesssim r_{\rm s} \lesssim L_o$,
it experiences the random fluctuations in a different way than above,  even for $\alpha < 1$. Indeed, integration by parts gives the estimate 
\begin{align*}
\left|\int_{|z|/L_o}^\infty ds \, s^{-\alpha} \mbox{sinc}(s)\right| &= \left| \Big(\frac{|z|}{L_o}\Big)^{-\alpha - 1} \cos\Big(\frac{|z|}{L_o}\Big) - 
(1+\alpha) \int_{|z|/L_o}^\infty ds \, s^{-\alpha-2} \cos(s)\right| \\& \le \Big(\frac{|z|}{L_o}\Big)^{-\alpha - 1}  + (1+\alpha)  \int_{|z|/L_o}^\infty ds \, s^{-\alpha-2}  = 2 \Big(\frac{|z|}{L_o}\Big)^{-\alpha - 1} ,
\end{align*}
and substituting into \eqref{eq:ST1} evaluated at $(\bX,z) = ({\bf 0},z)$ we get 
\begin{align}
\mbox{Cov}_\mu({\bf 0},z) & \le \frac{\chi_\alpha L_o^{\alpha + 1}}{\pi^2} |z|^{-2}, \quad \mbox{as} ~ |z| \to \infty.
\label{eq:ST6}
\end{align}
The decay at $|z| \to \infty$ is now fast enough to make  the covariance  integrable and we say that the process $\mu$ is mixing.

Note from \eqref{eq:ST2} that when $\alpha \in (1,2)$, the variance of $\mu$ is finite only for a finite 
outer scale $L_o$, while the inner scale can be either finite or tend to $0$. For the case $\alpha \in (0,1)$ the variance blows up 
in the limit $l_o \to 0$, but it is finite for $L_o \to \infty$.

\section{Asymptotic analysis for the long-range correlation case}
\label{sect:Asymptotic}
We now describe  the  solution $\varphi^\eps$ of the paraxial equation  (\ref{eq:Parax}-\ref{eq:ParaxIC}) in the asymptotic limit $\eps \to 0$, for  $\alpha \in (0,1)$ and an infinite outer scale. This case is interesting because the process $\mu$ has long-range correlations and there are two range scales that describe the randomization of $\varphi^\eps$. We show in subsection \ref{sect:TravelT} that $\varphi^\eps$ develops a significant random phase at a short, $\eps$ dependent range scale.  Thus, in order to analyze  it  at longer range, we need to remove this random phase i.e., observe $\varphi^\eps$ in a random travel time frame, as explained in section \ref{sect:Ito-Schr}. 

\subsection{Random central axis travel time analysis}
\label{sect:TravelT}
{We obtain from equations \eqref{eq:In1} and \eqref{eq:F2} that the random velocity along the axis of the beam is given by
\begin{equation}
\frac{c_o}{c^\eps({\bf 0},z)} = \sqrt{1 + \mu^\eps(\vx)} \sim   1 + \frac{\eps^3}{2} \mu\Big({\bf 0},\frac{z}{\eps^2} \Big), \qquad \mbox{as}~ \eps \to 0,
\label{eq:RT1}
\end{equation}
so the central  axis travel time is 
\begin{equation}
\int_0^z \frac{dz'}{c^\eps({\bf 0},z')} \sim  \frac{z}{c_o} +  \frac{\eps^4 {\cal Z}^\eps(z)}{c_o}, \qquad 
\rc{ {\cal Z}^\eps(z) = \frac{1}{2 \eps} \int_0^{z}  dz' \, \mu\big({\bf 0}, \frac{z'}{\eps^2}\big)},
\label{eq:RT3} 
\end{equation}
and has random fluctuations modeled by ${\cal Z}^\eps$.}
Due to the high frequency $\om=\frac{\varOmega}{\eps^4}$, these fluctuations have a significant effect on the phase of the wave field
\begin{equation}
\frac{\varOmega}{\eps^4} \int_0^z \frac{dz'}{c^\eps({\bf 0},z')} \sim \frac{k(\varOmega) z}{\eps^4} + k(\varOmega) {\cal Z}^\eps(z),
\label{eq:RT4}
\end{equation}
and the next proposition describes the asymptotics of ${\cal Z}^\eps$, as $\eps \to 0$.

\vspace{0.05in}

\begin{prop}
\label{prop.1}
The random process ${\cal Z}^\eps$ defined in \eqref{eq:RT3}
satisfies
\begin{equation}
{\cal Z}^\eps \Big(\eps^{2 \alpha/(1+\alpha)}z \Big) \to C_H W^H(z), \quad \mbox{as}~ \eps \to 0, 
\label{eq:RT5}
\end{equation}
where the convergence is in distribution, $W^H(z)$ is a fractional Brownian motion with Hurst index 
$H = (1+\alpha)/2$,
and
$
C_H  = \frac{1}{2 \pi} \sqrt{\frac{C_\alpha}{\alpha(\alpha+1)}}, 
$
with $C_\alpha$ given  in \eqref{eq:ST5}.
At $O(1)$ range the process ${\cal Z}^\eps$ satisfies 
\begin{equation}
\eps^\alpha {\cal Z}^\eps(z) \to C_H W^H(z), \quad \mbox{as}~ \eps \to 0,
\label{eq:RT8}
\end{equation}
where the convergence is in distribution and the limit is as in \eqref{eq:RT5}.
\end{prop}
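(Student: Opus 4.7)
The two statements \eqref{eq:RT5} and \eqref{eq:RT8} are equivalent, so it suffices to prove \eqref{eq:RT8}. Direct rescaling of the integral in \eqref{eq:RT3} yields the self-similarity identity
\[
{\cal Z}^\eps(\lambda z) = \sqrt{\lambda}\,{\cal Z}^{\eps/\sqrt{\lambda}}(z), \qquad \lambda>0.
\]
Choosing $\lambda=\eps^{2\alpha/(1+\alpha)}$ and $\tilde\eps=\eps^{1/(1+\alpha)}$ gives $\sqrt{\lambda}=\tilde\eps^\alpha$ and $\eps/\sqrt{\lambda}=\tilde\eps$, so ${\cal Z}^\eps(\eps^{2\alpha/(1+\alpha)}z)=\tilde\eps^\alpha{\cal Z}^{\tilde\eps}(z)$. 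Since $\tilde\eps\to 0$ iff $\eps\to 0$, the two assertions coincide after a relabeling of the small parameter.

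The first step is to establish convergence of the covariance. Starting from the definition,
\[
\EE\bigl[\eps^\alpha{\cal Z}^\eps(z_1)\,\eps^\alpha{\cal Z}^\eps(z_2)\bigr]=\frac{\eps^{2\alpha-2}}{4}\int_0^{z_1}\!\!\int_0^{z_2}\mbox{Cov}_\mu\bigl({\bf 0},(z_1'-z_2')/\eps^2\bigr)\,dz_1'\,dz_2'.
\]
I split the integration domain at a diagonal strip of width $\eps^2 l_o$, where boundedness of $\mbox{Cov}_\mu$ produces an $O(\eps^{2\alpha})$ contribution that vanishes; on the complement, $(z_1'-z_2')/\eps^2$ is large and the long-range tail \eqref{eq:ST4} dictates $\mbox{Cov}_\mu\sim\frac{C_\alpha}{2\pi^2}\eps^{2(1-\alpha)}|z_1'-z_2'|^{\alpha-1}$. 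The classical identity
\[
\int_0^{z_1}\!\!\int_0^{z_2}|s-t|^{\alpha-1}ds\,dt=\frac{1}{\alpha(1+\alpha)}\bigl(z_1^{2H}+z_2^{2H}-|z_1-z_2|^{2H}\bigr),\qquad 2H=1+\alpha,
\]
then identifies the limit as $\frac{C_H^2}{2}(z_1^{2H}+z_2^{2H}-|z_1-z_2|^{2H})$, which is exactly the covariance of $C_H W^H$ with $C_H^2=C_\alpha/(4\pi^2\alpha(1+\alpha))$.

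The second step is to upgrade this to convergence in distribution. Since $\mu=\phi(g)$ for a bounded centered smooth $\phi$ and a stationary isotropic Gaussian field $g$ (of unit variance after standardization), I expand $\phi=\sum_{k\ge 1}(a_k/\sqrt{k!})H_k$ and restrict to Hermite rank one, so that $a_1\neq 0$ and $\mbox{Cov}_g\sim\frac{C_\alpha}{2\pi^2 a_1^2}|\cdot|^{\alpha-1}$. The $k=1$ summand of $\eps^\alpha{\cal Z}^\eps$ is a linear functional of $g$, hence exactly Gaussian, and its covariance is the limit computed in the previous paragraph. A direct calculation shows the $k$-th summand has variance of order $\eps^{2(1-\alpha)(k-1)}$ times a polynomial in $z$ (for $k<1/(1-\alpha)$), and $O(\eps^{2\alpha}z)$ if $k\ge 1/(1-\alpha)$; in either case the variance is $o(1)$ as $\eps\to 0$ because $\alpha\in(0,1)$. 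Thus all higher-rank components vanish in $L^2$ and the limit is Gaussian. Tightness in $C([0,T])$ then follows from the Kolmogorov-Chentsov criterion applied to the second- and fourth-moment bounds $\EE[(\eps^\alpha{\cal Z}^\eps(z_1)-\eps^\alpha{\cal Z}^\eps(z_2))^{2p}]\le C|z_1-z_2|^{2pH}$ for $p=1,2$, the fourth moment coming from Isserlis' theorem for the dominant Gaussian piece and boundedness for the Hermite remainder; since $4H=2(1+\alpha)>1$ the criterion is met.

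The main obstacle is the second step. For small $k$ with $\alpha$ close to $1$, $\mbox{Cov}_g^k$ remains non-integrable, so the estimate of the $k$-th Hermite component cannot be reduced to a classical CLT and must instead exploit the long-range asymptotic of $\mbox{Cov}_g$. The delicate cancellation is between the prefactor $\eps^{2\alpha-2}$ and the integral $\int\!\!\int\mbox{Cov}_g^k(\cdot/\eps^2)\sim\eps^{2k(1-\alpha)}$, which leaves the suppression factor $\eps^{2(k-1)(1-\alpha)}$; this is $o(1)$ precisely because $\alpha<1$. The Hermite-rank-one hypothesis is also essential: without it, the first surviving term would converge to a (non-Gaussian) Hermite process rather than to a fractional Brownian motion, changing the nature of the limit.
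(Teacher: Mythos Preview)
Your argument is essentially correct and is a faithful sketch of the non-central limit theorem machinery (Taqqu, Dobrushin--Major) that underlies the cited results. The paper's own proof, however, is entirely by citation: it states that the Gaussian case is proved in \cite{marty04} and that the extension to $\mu=\phi(g)$ with $g$ Gaussian is in \cite{marty11travelt}, ``where the precise conditions on the function are given.'' So you have supplied, in outline, what those references contain, whereas the paper simply defers to them.

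Two remarks on the comparison. First, your Hermite-rank-one hypothesis is indispensable, as you note: if $a_1=0$ the limit would be a non-Gaussian Hermite process and the proposition as stated would be false. The paper's statement of the model in Section~\ref{sect:randmodel} does not make this assumption explicitly; it is hidden in the ``precise conditions'' of the cited reference. You are right to flag it. Second, your tightness step via Kolmogorov--Chentsov is the natural route but is only lightly sketched: controlling the fourth moment of the full Hermite expansion (not just the rank-one piece) uniformly in $\eps$ requires the diagram formula or a hypercontractivity bound, and the remainder terms must be shown to contribute $o(|z_1-z_2|^{4H})$ rather than merely $o(1)$. This is doable along the lines you indicate, but it is where most of the technical work in \cite{marty11travelt} lives.

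In short: your route is the right one and matches what the cited literature does; the paper itself does not reproduce any of it.
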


\vspace{0.05in} \begin{proof} The convergence is proved in \cite{marty04} for a Gaussian $\mu$. The result extends to a process $\mu$ given by a smooth and bounded function of a Gaussian process 
as shown in \cite{marty11travelt} where the precise conditions on the function are given.  \end{proof}

\vspace{0.04in}
 We recall from \cite{mandelbrot1968fractional} that the fractional Brownian motion $W^H$ is a Gaussian process, with stationary increments, satisfying 
\begin{equation}
\EE \big[W^H(z) \big] = 0, \quad~ \EE \big[ W^H(z) W^H(z')\big] = \frac{1}{2} \big[z^{2H} + (z')^{2H} + |z-z'|^{2H} \big].
\label{eq:RT9}
\end{equation}
The proposition says that: 

\vspace{0.04in}
\begin{enumerate}
\itemsep 0.04in
\item The process ${\cal Z}^\eps(z)$ and therefore the phase \eqref{eq:RT4} are randomized i.e., have significant random fluctuations, on a short $O(\eps^{2 \alpha/(1+\alpha)})$ range scale. 
\rc{In the physical variables (\ref{eq:scaling}),
this corresponds to a propagation distance which is such that 
$k(\omega_o^\eps)^2 \EE[ (\eps^{4} {\cal Z}^\eps(z))^2]\sim 1$, that is to say,
$z \sim  [ k(\omega_o^\eps)^2 \chi_\alpha^\eps ] ^{-1/(1+\alpha)}$.}
\item  Even though $\mu$ is not a Gaussian process, the phase fluctuations are Gaussian.
\item  The random fluctuations of the phase are huge i.e., $O(\eps^{-\alpha})$ at $O(1)$ range and must be removed in order to characterize the 
$\eps \to 0$ limit of $\varphi^\eps$.
\end{enumerate}

\subsection{Wave in the random travel time frame}
\label{sect:Ito-Schr}
After removing the random phase, which is equivalent to observing the wave in the central axis random time frame ${\cal Z}^\eps/c_o$, we get that 
\begin{equation}
\psi^\eps(\varOmega,\bX,z) = \varphi^\eps(\varOmega,\bX,z) \exp \big[-i k(\varOmega) {\cal Z}^\eps(z) \big],
\label{eq:IS1}
\end{equation}
satisfies the paraxial equation 
\begin{align}
\left[ 2 i k(\varOmega) \partial_z + \Delta_\bX + \frac{k^2(\varOmega)}{\eps} \nu \Big(\bX,\frac{z}{\eps^2} \Big) \right]  \psi^\eps (\varOmega,\bX,z) &= 0,  \quad z > 0, \label{eq:ParaxPsi} \\
 \psi^\eps (\varOmega,\bX,z=0) &= \hat F(\varOmega,\bX), \label{eq:ParaxPsiIC}
\end{align}
with the random potential 
\begin{equation}
\nu (\bX,z ) = \mu  (\bX,z) - \mu ({\bf 0},z).
\label{eq:defNu}
\end{equation}

The process $\nu$ is stationary in $z$, but not in $\bX$, and we explain next that its covariance is integrable in $z$. 
Indeed, 
\begin{align}
\mbox{Cov}_\nu(\bX,\bX',z-z') = \EE \big[ \nu(\bX,z) \nu(\bX',z')\big] = \mbox{Cov}_\mu(\bX-\bX',z-z')  \nonumber \\
+ \mbox{Cov}_\mu({\bf 0},z-z')- \mbox{Cov}_\mu(\bX',z-z') -  \mbox{Cov}_\mu(\bX,z-z') ,
\end{align}
 and using  equation \eqref{eq:ST1} we get for $\bX = \bX'$:
\begin{align}
\mbox{Cov}_\nu(\bX,\bX,z) =& \frac{\chi_\alpha |z|^{\alpha-1}}{\pi^2}\left[ \int_0^{|z|/l_o} d u \, u^{-\alpha} 
\mbox{sinc} (u) \right.\nonumber \\&\left.- \Big(1 + \frac{|\bX|^2}{z^2}\Big)^{(\alpha-1)/2} \int_0^{|z|/l_o\sqrt{1+|\bX|^2/z^2}} d u \, u^{-\alpha} 
\mbox{sinc} (u)\right]. \label{eq:Covnu}
\end{align}
We are interested in the decay of this expression at $|z| \to \infty$, which can be seen from the asymptotic expansion 
\begin{align}
\mbox{Cov}_\nu(\bX,\bX,z) &\sim \frac{C_\alpha}{\pi^2} |z|^{\alpha-1} \Big[1 - \Big(1 + \frac{|\bX|^2}{z^2}\Big)^{(\alpha-1)/2} \Big] \nonumber \\
&\sim  \frac{C_\alpha (1-\alpha)|\bX|^2}{2 \pi^2} |z|^{\alpha-3}, \quad \mbox{as} ~ |z| \to \infty,
\end{align}
with  constant $C_\alpha$ given by \eqref{eq:ST5}. {Since $\alpha \in (0,1)$, the} decay in $|z|$ is fast enough to make the covariance integrable, 
and we say that  the process $\nu$ is mixing.

%\begin{figure}[t]
%\begin{center}
%\begin{tabular}{cc}
%\includegraphics[width=5.cm]{}\includegraphics[width=5cm]{}
%\end{tabular}
%\end{center}
%\caption{The left plot shows the variance of $\nu$ as function of the lateral coordinate,
%that is $|\bX|/l_0\mapsto ({(1-\alpha)\pi^2})/({\chi_\alpha})  \EE[ {\nu}(\bx,z')e{\nu}(\bx,z')]$.
%A smaller value for $\alpha$  gives a more rapid relaxation toward the asymptotic value for the variance.
%The right plot show the normalized covariance function in range    with $r/l_0:=|\bx|/l_0=10$, that is 
%$z/l_0\mapsto ({(1-\alpha)\pi^2})/({\chi_\alpha})  \EE[ {\nu}(\bx,z'){\nu}(\bx,z'+z)]$.
%The top lines corresponds to $\alpha=0.2$ 
%while the bottom  lines to $\alpha=0.8$.  \rc{Should we combine Fig. \ref{fig1} and \ref{fig2} and display only the covariance of $\mu$ on the left and 
%of $\nu$ on the right, so they are easy to compare? Do we need the plot of the variance? }
%    \label{fig2}
%}
%\end{figure}

\vspace{0.04in}
\begin{prop}
\label{prop.2} 
The solution 
$\psi^\eps$ of (\ref{eq:ParaxPsi}-\ref{eq:ParaxPsiIC}) converges in distribution, in the space $C([0,+\infty),L^2(\RR \times \RR^2,\CC))$ of continuous functions of $z \in [0,\infty)$ that are square integrable in $(\varOmega,\bX)$,   to the solution of the 
{It\^o-Schr\"odinger equation} 
\begin{equation}
    \label{eq:itoschro}
 { d {\psi}(\varOmega,\bX,z) =   \frac{i}{2k(\varOmega)}  \Delta_\bX {\psi}(\varOmega,\bX,z) dz  + \frac{i k(\varOmega)}{2}   {\psi}(\varOmega,\bX,z) \circ d{W}(\bX,z)  }   , 
 \end{equation}
with initial condition 
\begin{equation}
\psi(\varOmega,\bX,z=0) = \hat F(\varOmega,\bX). \label{eq:itoschroIC}
\end{equation}
 The symbol ``$\circ$" denotes the Stratonovich integral and 
$W(\bX,z)$ is  a  centered Brownian field.
%, independent of the fractional Brownian field $W^H$ in Proposition \ref{prop.1}.  
It satisfies $\EE[ W(\bX,z) W(\bX',z') ] = \gamma(\bX,\bX') \, \min( z , z')$, with 
\begin{equation}
\gamma(\bX,\bX') = 
%4\pi^2\chi_\alpha
\frac{\chi_\alpha}{2\pi} 
\int_0^{l_0^{-1}} d \kappa \, 
\big[ J_0(\kappa |\bX-\bX'|) +1 - J_0(\kappa |\bX|)-J_0( \kappa |\bX'|) \big] \label{eq:defgamma}
\kappa^{-1-\alpha},
\end{equation}
where $J_0$ is the Bessel function of the first kind and of order $0$.
\end{prop}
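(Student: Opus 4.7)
The plan is to invoke the diffusion-approximation / martingale-problem framework for paraxial Schrödinger equations with mixing random potentials, developed in \cite{fannjiang2004scaling,garnier2009coupled}. Equation (\ref{eq:ParaxPsi}) has exactly the form treated there, with the random potential $\nu(\bX,z) = \mu(\bX,z) - \mu({\bf 0},z)$ replacing $\mu$. The crucial hypothesis, that the two-point covariance of the potential is integrable in $z$, is precisely what the central-axis subtraction achieves: the excerpt establishes $\mbox{Cov}_\nu(\bX,\bX,z) = O(|z|^{\alpha-3})$ as $|z|\to\infty$, so for $\alpha \in (0,1)$ the process $\nu$ is mixing even though $\mu$ itself is not. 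Moreover, $\nu$ inherits the required smoothness, boundedness and higher-moment bounds from the Gaussian-subordinated structure of $\mu$.

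The first technical step is tightness of $\{\psi^\eps\}$ in $C([0,\infty), L^2(\RR\times\RR^2,\CC))$. Since $\nu$ is real-valued, the flow defined by (\ref{eq:ParaxPsi}) is unitary in $\bX$ for each fixed $\varOmega$ and each realization of $\nu$, so $\|\psi^\eps(\varOmega,\cdot,z)\|_{L^2(\RR^2)} = \|\hat F(\varOmega,\cdot)\|_{L^2(\RR^2)}$ is conserved uniformly in $\eps$. Combining this a priori bound with an Aldous-type criterion applied to smooth cylindrical functionals, and using the mixing estimate on $\nu$ to control $z$-increments, yields tightness.

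The limit law is then identified by the perturbed test function method. For a smooth bounded cylindrical functional $\phi$ on $L^2$, the generator of $\phi(\psi^\eps(z))$ contains singular $O(\eps^{-2})$ and $O(\eps^{-1})$ terms from the random potential; these are absorbed by correctors $\eps\phi_1^\eps + \eps^2 \phi_2^\eps$ expressed as integrals of $\phi$ against the semigroup driven by the correlation function of $\nu$. The $O(1)$ remainder converges to the generator of the Stratonovich Itô-Schrödinger equation (\ref{eq:itoschro}), with Brownian field covariance
\[
\gamma(\bX,\bX') \;=\; \int_{-\infty}^{\infty} \mbox{Cov}_\nu(\bX,\bX',z)\, dz.
\]
The explicit formula (\ref{eq:defgamma}) follows by expanding $\nu$ into four $\mbox{Cov}_\mu$ terms: for each $\bY$, $\int_{-\infty}^\infty \mbox{Cov}_\mu(\bY,z)\,dz$ equals the inverse two-dimensional Fourier transform of $\mathbb{S}(\bkappa_\perp,0)$, which via polar coordinates and the identity $\int_0^{2\pi}e^{i\kappa|\bY|\cos\theta}\,d\theta = 2\pi J_0(\kappa|\bY|)$ yields $\frac{\chi_\alpha}{2\pi}\int_0^{l_o^{-1}}\kappa^{-1-\alpha}J_0(\kappa|\bY|)\,d\kappa$. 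Summing the four contributions gives exactly (\ref{eq:defgamma}); note that the combination of Bessel functions behaves like $O(\kappa^2)$ near $\kappa=0$, taming the infrared divergence of $\kappa^{-1-\alpha}$ that would otherwise obstruct the limit $L_o\to\infty$. Well-posedness of (\ref{eq:itoschro}) then yields uniqueness of the martingale problem and hence of the limit in distribution.

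The principal obstacle is running the perturbed-test-function construction uniformly in the transverse variable: because $\mu$ itself is not mixing, the bounds on the correctors depend on $|\bX|,|\bX'|$ through the $\bX$-dependence of $\mbox{Cov}_\nu$. One must verify that these bounds are uniform on the effective support of $\hat F$, of size $r_{\rm s}$, and that the remainder terms are genuinely $o(1)$ as $\eps \to 0$ in the $L^2$ topology, not merely pointwise. Once these uniformity issues are settled, the convergence reduces directly to the framework of \cite{fannjiang2004scaling,garnier2009coupled}.
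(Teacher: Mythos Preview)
Your proposal is correct and follows essentially the same route as the paper, which likewise reduces the result to the fixed-frequency diffusion-approximation theorem of \cite{fannjiang2004scaling} and then extends to the multi-frequency setting. The paper's own argument for that extension differs from yours only in the topology bookkeeping: it first establishes tightness in $D([0,+\infty),L^2_w)$ (weak $L^2$) via Kushner's criterion, then upgrades to strong $L^2$ by noting that both $\psi^\eps$ and the limit $\psi$ conserve the $L^2$-norm of the initial data, and finally passes from $D$ to $C$ using continuity of both processes; your explicit perturbed-test-function sketch and computation of $\gamma(\bX,\bX')$ spell out what the paper leaves to the citation.
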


\begin{proof}
This theorem was proved for a fixed frequency in \cite{fannjiang2004scaling}:
For any $\varOmega \neq 0$, 
the solution $(z,\bX)\mapsto \psi^\eps(\varOmega,\bX,z)$ of (\ref{eq:ParaxPsi})
converges in distribution, in the space $D([0,+\infty),L^2(\RR^2,\CC))$, to the solution  $(z,\bX)\mapsto \psi(\varOmega,\bX,z)$  of 
(\ref{eq:itoschro}). Here $D$ is the space of c\`adl\`ag functions.
The proof can be  extended to the multi-frequency case as follows: for any set of non-zero frequencies $(\varOmega_j)_{j=1}^n$, the random process $(z,\bX)\mapsto (\psi^\eps(\varOmega_j,\bX,z))_{j=1}^n$
converges in distribution in $D([0,+\infty),L^2(\RR^2,\CC^n))$ to the process $(z,\bX)\mapsto (\psi(\varOmega_j,\bX,z))_{j=1}^n$.

The tightness of $\psi^\eps$ in $D([0,+\infty),L^2_w(\RR\times \RR^2,\CC))$ (with $L^2_w$ equipped with the weak topology) can be established as in \cite[Section 3.1]{fannjiang2004scaling} by using the tightness criterion  \cite[Chap. 3, Theorem 4]{kushner}.
This proves the convergence of $\psi^\eps$ to $\psi$ in the space $D([0,+\infty),L^2_w(\RR\times \RR^2,\CC))$.
Since both the original and limit processes preserve the $L^2$-norm of the initial data, the process converges in 
$D([0,+\infty),L^2(\RR\times \RR^2,\CC))$.
Furthermore, since both the original and limit processes are continuous,
the convergence actually holds in $C([0,+\infty),L^2(\RR\times \RR^2,\CC))$.
\end{proof}

\section{Application of the asymptotic analysis}
\label{sect:Application}
We now use the asymptotic results stated in Propositions \ref{prop.1} and \ref{prop.2} to analyze  the coherent wave (subsection 
\ref{sect:CohWave}) and the spatial and frequency covariance of $\varphi^\eps$ (subsections \ref{sect:CovSpatial}--\ref{sect:CovFreq}) 
in the limit $\eps \to 0$.
We also characterize in subsection \ref{sect:Pulse} the deformation of the pulse emitted by the source, induced by scattering in the random medium.

\subsection{The coherent wave}
\label{sect:CohWave}
Scattering causes a loss of coherence of the wave field, which manifests  as an exponential  decay  
of the mean wave (aka coherent wave) $\EE[\varphi^\eps]$ with respect to the range $z$.  The length scale of decay, called the scattering mean free path, 
gives the range limit at which conventional methods\footnote{Conventional methods are based on the assumption that the medium through which the waves propagate is homogeneous or more generally, known and non-scattering. }  used for imaging and free space communication are useful in random media.

The leading factor in the loss of coherence of $\varphi^\eps$ is the random phase $k {\cal Z}^\eps$, which becomes significant at  $O(\eps^{2 \alpha/(1+\alpha)})$ range. Indeed, Propositions \ref{prop.1} and \ref{prop.2} give that 
\begin{equation}
\EE\big[ \exp(i k(\varOmega) {\cal Z}^\eps(\eps^{2\alpha/(1+\alpha)} z) \big] \stackrel{\eps \to 0}{\longrightarrow} \exp\Big[ - \frac{C_H^2 k^2(\varOmega) z^{2H}}{2} \Big]
\label{eq:LC1}
\end{equation}
and  
\begin{equation}
\EE\big[ \varphi^\eps(\varOmega,\bX,\eps^{2\alpha/(1+\alpha)} z) \big] \stackrel{\eps \to 0}{\longrightarrow} \hat F(\varOmega,\bX) \exp\Big[ - \frac{C_H^2 k^2(\varOmega) z^{2H}}{2} \Big], \label{eq:LC2}
\end{equation}
so the scattering mean free path  has the asymptotic expansion 
\begin{equation}
\mathscr{S}_{\varphi^\eps}(\varOmega) \sim  \eps^{2\alpha/(1+\alpha)} [ C_H k(\varOmega) ]^{-1/H}.
\label{eq:LC3}
\end{equation}

However, the wave $\psi^\eps$ defined in \eqref{eq:IS1} by removing the large random phase $k {\cal Z}^\eps$ from $\varphi^\eps$, maintains its coherence up to a much longer, $O(1)$ range.  Proposition \ref{prop.2} gives that 
\begin{equation}
\EE[\psi^\eps(\varOmega,\bX,z)]   \stackrel{\eps \to 0}{\longrightarrow}  M_1(\varOmega,\bX,z),
\label{eq:defM1}
\end{equation}
where $M_1$ solves the evolution equation
 \begin{equation}
    \partial_z M_1(\varOmega,\bX,z) =   \frac{i}{2k(\varOmega)}  \Delta_\bX M_1(\varOmega,\bX,z) - \frac{k^2(\varOmega)}{4} \Theta(\bX)  M_1(\varOmega,\bX,z)  ,     \label{eq:evolcoh}
\end{equation}
obtained by taking the expectation in \eqref{eq:itoschro}, with initial condition derived from \eqref{eq:itoschroIC}
 \begin{equation}
 \quad M_1 (\varOmega,\bX,z=0) =  \hat F(\varOmega,\bX) ,
    \label{eq:evolcohIC}
\end{equation}
and with damping coefficient
\begin{align}
\label{def:Theta}
\Theta(\bX) &=  \frac{\gamma(\bX,\bX)}{2} = \frac{\chi_\alpha}{2\pi} 
\int_0^{l_0^{-1}} d \kappa \, \big[ 1 - J_0(\kappa |\bX|)\big] \kappa^{-1-\alpha}  \nonumber \\
&= \frac{\chi_\alpha|\bX|^\alpha}{2\pi} 
\int_0^{|\bX|/l_o} d s \, \big[ 1 - J_0(s)\big] s^{-1-\alpha}.
\end{align}
The damping models the loss of coherence of $\psi^\eps$. It is weaker at the axis of the beam and it increases away from it.  In fact, at  $|\bX|/l_o \to \infty$  we get the asymptotic expansion 
\begin{align}
\label{def:Gamma}
\Theta(\bX)  \sim d_\alpha |\bX|^\alpha, \quad d_\alpha = \frac{\chi_\alpha}{2\pi} 
\int_0^{\infty} d s \, \big[ 1 - J_0(s)\big] s^{-1-\alpha} = \frac{\chi_\alpha}{2^{1+\alpha} \pi} \frac{\Gamma(1-\alpha/2)}{\alpha \Gamma(1+\alpha/2)}.
\end{align}

\subsection{Spatial covariance}
\label{sect:CovSpatial}
Although the wave loses its coherence (the mean wave decays with the propagation distance), 
wave energy is not lost but converted into incoherent, zero-mean fluctuations.
These incoherent waves can be characterized by the second-order moments of the wave field, that we analyze in this subsection and the next ones.
For imaging purposes, 
it is possible to extract information from the observation 
of the incoherent waves and their correlation properties
in space and frequency. An example of exploiting such knowledge is the  coherent interferometric (CINT) methodology 
for robust imaging in random media \cite{borcea2021imaging,borcea2011enhanced,borcea2006adaptive}. 

There are two intrinsic scales that capture the decorrelation properties of the wave field: the ``decoherence length",
which is the length scale of decay of the covariance of $\varphi^\eps$ over cross-range offsets and the ``decoherence frequency", 
which is the frequency scale of decay of the covariance over frequency offsets. In this subsection  we study the spatial 
covariance i.e.,  fix the frequency at $\varOmega$,  and estimate the decoherence length. 
We note from definition \eqref{eq:IS1} that the phase $k {\cal Z}^\eps$ plays no role in the spatial covariance, 
\begin{align*}
\EE\big[\varphi^\eps(\varOmega,\bX_1, z) \overline{\varphi^\eps(\varOmega,\bX_2,z)} \big] = \EE\big[\psi^\eps(\varOmega,\bX_1, z) \overline{\psi^\eps(\varOmega,\bX_2,z)} \big] 
\stackrel{\eps \to 0}{\longrightarrow}
 {\cC}_\varOmega(\bX_1,\bX_2, z).
\end{align*}
Here the bar stands for the complex conjugate, the notation $\cC_\varOmega$ emphasizes that the frequency is fixed at $\varOmega$,  and the $\eps \to 0$ limit 
\begin{equation}
 {\cC}_\varOmega(\bX_1,\bX_2, z) 
=\EE \big[ \psi(\varOmega,\bX_1,z) \overline{\psi(\varOmega,\bX_2,z)}\big]
\end{equation}
is obtained from the It\^{o}-Schr\"{o}dinger equation  in Proposition \ref{prop.2}. Using the identity 
\[
\gamma(\bX_1,\bX_2) - \Theta(\bX_1) - \Theta(\bX_2) = - \Theta(\bX_1-\bX_2),
\]
deduced from definitions \eqref{eq:defgamma} and \eqref{def:Theta}, we get the  evolution  equation 
\begin{align}
\partial_z \cC_\varOmega(\bX_1,\bX_2,z) = \left[\frac{i}{2 k(\varOmega)} \big(\Delta_{\bX_1} - \Delta_{\bX_2}\big) - \frac{k^2(\varOmega)}{4} \Theta(\bX_1-\bX_2) \right]{\cC}_\varOmega(\bX_1,\bX_2, z), \label{eq:SPC1}
\end{align}
for $z > 0$, with initial condition
\begin{equation}
\cC_\varOmega(\bX_1,\bX_2,z=0) = \hat F(\varOmega,\bX_1) \overline{\hat F(\varOmega,\bX_2)}.
\label{eq:SPC2}
\end{equation}

We can solve equation \eqref{eq:SPC1} explicitly, by changing coordinates 
\begin{equation}
\label{eq:coordC}
(\bX_1,\bX_2) \mapsto (\obX,\tbX), \quad \obX = \frac{1}{2}(\bX_1 + \bX_2), \quad \tbX = \bX_1-\bX_2,
\end{equation}
and then taking the Fourier transform with respect to the offset vector $\tbX$, which defines the 
mean Wigner transform 
\begin{equation}
\cW_\varOmega(\obX,\bkappa,z) = \int_{\RR^2} d \tbX \, \cC_{\varOmega} \Big(\obX + \frac{\tbX}{2},\obX-\frac{\tbX}{2},z\Big) e^{-i \bkappa \cdot \tbX}.
\label{eq:SPC3}
\end{equation}
This transform is important by itself, as it tells us how the energy at $\obX$ is distributed over the directions   i.e., along $\bkappa$.
It plays a key role in the analysis of imaging and time reversal methods in random media 
\cite{borcea2011enhanced,borcea2007asymptotics,papanicolaou2007self}.
The calculation of $\cW_\varOmega$ is given in Appendix \ref{ap:A} and the result is stated in the following proposition: 

\vspace{0.04in}
\begin{prop}
\label{prop.3}
The mean Wigner transform is given by 
\begin{align}
\nonumber
\cW_\varOmega(\obX,\bkappa,z) =&\frac{1}{(2\pi)^2}
\int_{\RR^2} d \bq \int_{ \RR^2}  d \tbX \, \exp \Big[ i\bq \cdot\Big(\obX-\bkappa \frac{z}{k(\varOmega)}\Big)-i \bkappa\cdot\tbX \Big] \hat{\cW}_{\varOmega,0}(\bq,\tbX) \\
&\qquad \qquad \qquad \times
\exp\Big[ -\frac{k^2(\varOmega)}{4}\int_0^z dz' \, \Theta \Big(\tbX+\frac{\bq z'}{k(\varOmega)} \Big)\Big] ,
\label{eq:expressW2}
\end{align}
with 
\begin{equation}
\hat{\cW}_{\varOmega,0}(\bq,\tbX) = \int_{\RR^2} d \obX \, \hat F\Big(\varOmega, \obX + \frac{\tbX}{2}\Big) \overline{\hat F\Big(\varOmega,\obX - \frac{\tbX}{2}\Big)} 
e^{-i \bq \cdot \obX}. \label{eq:expressW2IC}
\end{equation}

\end{prop}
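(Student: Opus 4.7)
My plan is to solve the closed evolution equation \eqref{eq:SPC1} for $\cC_\varOmega$ by passing to sum-and-difference coordinates, doing a partial Fourier transform in the sum variable, and then integrating an ODE along characteristics. The main structural observation is that the cross-difference of Laplacians becomes a mixed derivative,
\begin{equation*}
\Delta_{\bX_1} - \Delta_{\bX_2} = 2\, \nabla_{\obX} \cdot \nabla_{\tbX},
\end{equation*}
so in the coordinates \eqref{eq:coordC} the equation \eqref{eq:SPC1} reads
\begin{equation*}
\partial_z \cC_\varOmega(\obX,\tbX,z) = \left[\frac{i}{k(\varOmega)}\, \nabla_{\obX} \cdot \nabla_{\tbX} - \frac{k^2(\varOmega)}{4}\,\Theta(\tbX)\right] \cC_\varOmega(\obX,\tbX,z).
\end{equation*}
The key point is that $\Theta$ depends only on $\tbX$, so after a Fourier transform in $\obX$ alone the transport-with-damping structure becomes explicit.

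Next I would introduce the partial Fourier transform $\hat\cC_\varOmega(\bq,\tbX,z) = \int d\obX\, \cC_\varOmega(\obX,\tbX,z) e^{-i\bq\cdot\obX}$. Then $\nabla_{\obX} \mapsto i\bq$ and the PDE becomes a first-order transport equation in $\tbX$ with multiplicative potential,
\begin{equation*}
\partial_z \hat\cC_\varOmega(\bq,\tbX,z) + \frac{\bq}{k(\varOmega)}\cdot\nabla_{\tbX} \hat\cC_\varOmega(\bq,\tbX,z) = -\frac{k^2(\varOmega)}{4}\,\Theta(\tbX)\, \hat\cC_\varOmega(\bq,\tbX,z).
\end{equation*}
Solving by characteristics $\tbX(z) = \tbX_0 + \bq z/k(\varOmega)$ gives, after setting $\tbX_0 = \tbX - \bq z/k(\varOmega)$,
\begin{equation*}
\hat\cC_\varOmega(\bq,\tbX,z) = \hat{\cW}_{\varOmega,0}\!\left(\bq, \tbX - \tfrac{\bq z}{k(\varOmega)}\right) \exp\!\left[-\frac{k^2(\varOmega)}{4} \int_0^z dz'\, \Theta\!\left(\tbX - \tfrac{\bq z'}{k(\varOmega)}\right)\right],
\end{equation*}
where the initial data $\hat{\cW}_{\varOmega,0}$ agrees with \eqref{eq:expressW2IC} after rewriting \eqref{eq:SPC2} in the $(\obX,\tbX)$ variables.

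Finally, I would recover $\cW_\varOmega$ by inverse Fourier transform in $\bq$ combined with the Fourier transform in $\tbX$ from \eqref{eq:SPC3}:
\begin{equation*}
\cW_\varOmega(\obX,\bkappa,z) = \frac{1}{(2\pi)^2}\int d\bq \int d\tbX\, e^{i\bq\cdot\obX - i\bkappa\cdot\tbX}\, \hat\cC_\varOmega(\bq,\tbX,z).
\end{equation*}
The cleanest way to match the stated formula is the substitution $\tbX \mapsto \tbX + \bq z/k(\varOmega)$ in the outer integral, which removes the shift inside $\hat{\cW}_{\varOmega,0}$ and produces the phase $i\bq\cdot(\obX - \bkappa z/k(\varOmega))$. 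The damping exponent then involves $\Theta(\tbX + \bq(z-z')/k(\varOmega))$, and reversing the internal integration variable $z' \to z-z'$ converts this to $\Theta(\tbX + \bq z'/k(\varOmega))$, giving exactly \eqref{eq:expressW2}.

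The only delicate step is the bookkeeping of the two changes of variable — one in $\tbX$ and one in the damping integration variable $z'$ — since signs and shifts must align so that the final kernel depends on $\bq\cdot(\obX - \bkappa z/k(\varOmega))$ rather than $\bq\cdot\obX$. Everything else is straightforward; well-posedness of the transport ODE along characteristics is immediate because $\Theta$ is continuous and nonnegative, so no regularization beyond what is already used in Proposition \ref{prop.2} is needed.
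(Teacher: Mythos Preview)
Your proposal is correct and follows essentially the same route as the paper's proof in Appendix~\ref{ap:A}: change to center--difference coordinates, Fourier transform in the center variable $\obX$ to obtain the transport equation \eqref{eq:A3}, integrate along characteristics $\tbX = \tbX_0 + \bq z/k(\varOmega)$, and then invert. The only cosmetic differences are that the paper keeps the damping integrand in the form $\Theta\big(\tbX - \tfrac{\bq}{k(\varOmega)}(z-z')\big)$ before the final shift $\tbX' = \tbX - \bq z/k(\varOmega)$, whereas you first reparametrize $z' \to z-z'$; the net bookkeeping is identical.
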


\vspace{0.04in} The spatial covariance is obtained from the expression \eqref{eq:expressW2} using the inverse Fourier transform
\begin{align}
\nonumber
\cC_{\varOmega} \Big(\obX + \frac{\tbX}{2},\obX-\frac{\tbX}{2},z\Big) &= \frac{1}{(2 \pi)^2} \int_{\RR^2} d \bkappa \, \cW_\varOmega(\obX,\bkappa,z) e^{i \bkappa \cdot \tbX} \\&= \frac{1}{(2 \pi)^2} \int_{\RR^2} d \bq \, \hat{\cW}_{\varOmega,0} \Big(\bq ,\tbX - \frac{\bq z}{k(\varOmega)} \Big) e^{i \bq \cdot \bX} \nonumber \\
&\quad  \times \exp \Big[- \frac{k^2(\varOmega)}{4}\int_0^z dz' \, \Theta \Big(\tbX-\frac{\bq (z-z')}{k(\varOmega)}\Big)\Big], \label{eq:Covariance}
\end{align} 
and we study it next using the asymptotic expansion \eqref{def:Gamma} of $\Theta$, which {holds when its argument is much larger than $l_o$.}
Note that the coefficient $d_\alpha$ in this expansion quantifies the strength of the fluctuations in the random medium. 

{We have already assumed a large outer scale $L_o$. We now consider, in addition, 
 a strong fluctuation and small inner scale regime, in the sense 
\begin{equation}
l_o \ll \frac{1}{Q(z)}   \ll r_{\rm s} \ll R(z),
\label{eq:SMF} 
\end{equation}}
{where we recall that $r_{\rm s}$ is the initial radius of the beam.  There are two new  scales in equation \eqref{eq:SMF}: the range dependent beam radius}
\begin{equation}
R(z) = \left[\frac{d_{\alpha} k^{2-\alpha}(\varOmega) z^{\alpha+1}}{\alpha+1}\right]^{1/\alpha},
\label{eq:defR}
\end{equation}
{which quantifies the  spatial support of the mean intensity (subsection \ref{sect:MInt}), 
and the range dependent wave vector radius}
\begin{equation}
Q(z) = \left[d_{\alpha} k^{2}(\varOmega) z\right]^{1/\alpha},
\label{eq:defQ}
\end{equation}
{which quantifies the  wave vector support of the mean spectrum (subsection \ref{sect:MSpect}).}

\subsubsection{The mean intensity} \label{sect:MInt}
The mean intensity $\EE\left[ |\psi(\varOmega,\obX,z)|^2\right]$ is equal to ${\cal C}_\varOmega(\obX,\obX,z)$. From Proposition \ref{prop.3} we obtain the following result.
\begin{proposition}
In the regime (\ref{eq:SMF}), 
the mean intensity has the form
\begin{align}
\EE\left[ |\psi(\varOmega,\obX,z)|^2\right] & \simeq \frac{\hat \cW_{\varOmega,0}({\bf 0},{\bf 0})}{R^2(z)} \Psi_\alpha \Big(\frac{\obX}{R(z)}\Big),
\label{eq:MeanInt}
\end{align}
with   $\hat{\cW}_{\varOmega,0} $ given by \eqref{eq:expressW2IC} and
\begin{align}
\Psi_\alpha (\bxi) &= \frac{1}{(2\pi)^2} \int_{\RR^2} d \bu \,  e^{i \bu \cdot \bxi - \frac{ |\bu |^{\alpha}}{4}} = 
\frac{1}{2 \pi} \int_0^\infty d \eta  \, \eta J_0(|\bxi| \eta ) e^{- \frac{ \eta^{\alpha}}{4}}. \label{eq:defPsialpha}
\end{align}
\end{proposition}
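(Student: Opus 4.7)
The starting point is the explicit representation \eqref{eq:Covariance} of the spatial covariance obtained from Proposition~\ref{prop.3}. Since $\EE[|\psi(\varOmega,\obX,z)|^2] = \cC_\varOmega(\obX,\obX,z)$, I would specialize that formula to $\tbX = {\bf 0}$, which yields
\[
\EE[|\psi(\varOmega,\obX,z)|^2] = \frac{1}{(2\pi)^2}\int_{\RR^2} d\bq \, \hat\cW_{\varOmega,0}\!\Big(\bq,-\frac{\bq z}{k(\varOmega)}\Big)\, e^{i\bq\cdot\obX}\, \exp\!\Big[-\frac{k^2(\varOmega)}{4}\!\int_0^z\! dz'\, \Theta\!\Big(\frac{\bq(z-z')}{k(\varOmega)}\Big)\Big],
\]
using that $\Theta$ is even. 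The entire argument then reduces to identifying the leading behavior of the integrand under the scale separation \eqref{eq:SMF}.

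Next, I would replace $\Theta$ by its large-argument asymptote \eqref{def:Gamma}, i.e. $\Theta(\bX)\sim d_\alpha|\bX|^\alpha$. Substituting, the $z'$-integral is elementary: $\int_0^z (z-z')^\alpha dz' = z^{\alpha+1}/(\alpha+1)$, and the exponent collapses to $-\tfrac{d_\alpha k^{2-\alpha}(\varOmega) z^{\alpha+1}}{4(\alpha+1)}|\bq|^\alpha = -\tfrac{1}{4}\,R(z)^\alpha|\bq|^\alpha$ by the very definition \eqref{eq:defR} of $R(z)$. This Gaussian-like weight concentrates the $\bq$-integration at $|\bq|\lesssim 1/R(z)$.

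With the dominant wave vectors satisfying $|\bq|\sim 1/R(z)$, the two arguments of $\hat\cW_{\varOmega,0}$ are of size $|\bq|\sim 1/R(z)$ and $|\bq| z/k(\varOmega)\sim z/(k(\varOmega) R(z))\sim 1/Q(z)$, using the identity $(z/(k R))^\alpha = (\alpha+1)/Q^\alpha$ that follows from \eqref{eq:defR}–\eqref{eq:defQ}. Since $\hat\cW_{\varOmega,0}(\bq,\tbX)$, defined in \eqref{eq:expressW2IC}, varies in $\bq$ on scale $1/r_{\rm s}$ and in $\tbX$ on scale $r_{\rm s}$, the hypotheses $r_{\rm s}\ll R(z)$ and $1/Q(z)\ll r_{\rm s}$ precisely ensure that both arguments are small on those respective scales. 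Thus $\hat\cW_{\varOmega,0}(\bq,-\bq z/k(\varOmega))$ may be replaced by $\hat\cW_{\varOmega,0}({\bf 0},{\bf 0})$ to leading order, and after the change of variable $\bu = R(z)\bq$ one obtains \eqref{eq:MeanInt}. The Bessel-function form of $\Psi_\alpha$ in \eqref{eq:defPsialpha} is just polar coordinates in $\bu$ together with $\int_0^{2\pi} e^{i\eta|\bxi|\cos\theta}\,d\theta = 2\pi J_0(\eta|\bxi|)$.

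The one genuinely delicate point is the replacement of $\Theta$ by $d_\alpha|\cdot|^\alpha$ inside the integral over $z'$, since that asymptote fails when the argument is below $l_o$. Near $z' = z$ the argument $|\bq(z-z')/k(\varOmega)|$ passes through zero, and a short computation from \eqref{def:Theta} (Taylor expansion of $1-J_0$) shows $\Theta(\bX)=O(|\bX|^2 l_o^{\alpha-2})$ there. The integral over the transition region $|z-z'|\lesssim l_o k(\varOmega)/|\bq|$ contributes an error that is lower order than $R(z)^\alpha|\bq|^\alpha$ precisely under the regime \eqref{eq:SMF} (using $l_o \ll 1/Q(z)$ and $\alpha<1$), which is why that specific strong-fluctuation/small-inner-scale hypothesis is needed. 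I would estimate this error term explicitly to close the argument; this is the main technical obstacle, the rest of the proof being a direct asymptotic evaluation.
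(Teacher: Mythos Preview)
Your proposal is correct and follows essentially the same route as the paper: set $\tbX={\bf 0}$ in \eqref{eq:Covariance}, replace $\Theta$ by its asymptote $d_\alpha|\cdot|^\alpha$, evaluate the $z'$-integral, rescale $\bu=R(z)\bq$, and use \eqref{eq:SMF} to freeze $\hat\cW_{\varOmega,0}$ at $({\bf 0},{\bf 0})$. The paper simply invokes \eqref{def:Gamma} without discussing the small-argument error near $z'=z$, so your last paragraph is in fact more careful than the original; this extra step is not needed for the paper's level of rigor but is a reasonable addition if you want a sharper justification.
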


\begin{proof}
Setting $\tbX = {\bf 0}$ in  \eqref{eq:Covariance} and using the asymptotic expansion \eqref{def:Gamma}  we obtain 
the following expression of the mean intensity
\begin{align*}
\EE\left[ |\psi(\varOmega,\obX,z)|^2\right] & = \frac{1}{(2 \pi)^2} \int_{\RR^2} d \bq \, \hat{\cW}_{\varOmega,0} \Big(\bq ,- \frac{\bq z}{k(\varOmega)} \Big) 
\exp \Big[i \bq \cdot \bX - \frac{R^\alpha(z) |\bq |^{\alpha}}{4} \Big] \nonumber \\
&= \frac{1}{(2 \pi)^2 R^2(z)} \int_{\RR^2} d \bu  \, \hat{\cW}_{\varOmega,0} \Big(\frac{\bu}{R(z)} ,- \frac{\bu z}{k(\varOmega)R(z)} \Big) \exp \Big[i  \frac{\bu \cdot \bX}{R(z)} - \frac{ |\bu |^{\alpha}}{4} \Big], 
\end{align*}
where we let  $\bq = \bu/R$,  with $R$ defined in \eqref{eq:defR}. Due to the exponential, only $|\bu| = O(1)$ contributes to the integral, so the arguments of $\hat{\cW}_{\varOmega,0}$ satisfy 
\begin{equation}
\frac{|\bu|}{R(z)} = O \left(R^{-1}(z)\right) \ll r_{\rm s}^{-1}, \qquad \frac{|\bu| z}{k(\varOmega)R(z)} = O \left(\frac{z}{k(\varOmega)R(z)}\right) \ll r_{\rm s}.
\label{eq:Ineq1}
\end{equation}
Here we used the assumption \eqref{eq:SMF} and the second inequality is because by definitions (\ref{eq:defR}-\ref{eq:defQ}) we have 
\[
\frac{z}{k(\varOmega)R(z)} = \frac{(\alpha+1)^{1/\alpha}}{[d_{\alpha} k^{2}(\varOmega) z]^{1/\alpha}} = \frac{(\alpha+1)^{1/\alpha}}{Q(z)} \ll r_{\rm s}.
\]
 We infer from definition \eqref{eq:expressW2IC} of $\hat{\cW}_{\varOmega,0} $ that its support in the first argument is at wave vectors with 
$O(r_{\rm s}^{-1})$ norm and the support in the second argument is at cross-range vectors of $O(r_{\rm s})$ norm.  Thus, due to the inequalities 
\eqref{eq:Ineq1}, we can approximate the mean intensity by 
(\ref{eq:MeanInt}).
\end{proof}

We plot the function $\Psi_\alpha$ in section \ref{sect:Comparison}. It peaks at the origin and it is negligible outside a disk of $O(1)$ radius. It is smooth at ${\bf 0}$ and can be expanded as
\begin{equation}
\label{eq:expandpsialpha}
\Psi_\alpha (\bxi)
=
\Psi_\alpha ({\bf 0}) \big[ 1 -q_\alpha |\bxi|^2 +o(|\bxi|^2) \big]  ,
\end{equation}
where 
\begin{align}
\Psi_\alpha({\bf 0})=\frac{2^{{4}/{\alpha}} \Gamma(2/\alpha)}{2\pi \alpha} ,\quad \quad 
q_\alpha = 2^{4/\alpha-2} \frac{\Gamma(4/\alpha) }{\Gamma(2/\alpha)}. 
\end{align}
Therefore, the scale $R(z)$ quantifies the support of the mean intensity, and  we call it the ``beam radius" at range $z$. If there where no random medium, beam broadening would be entirely due to diffraction. Here the broadening is caused by  scattering in the random medium and it is significant, because $R(z)$ is  much larger than the initial radius $r_{\rm s}$ of the beam, per equation \eqref{eq:SMF} and 
with a growth rate in  $z$  that is higher than in the homogeneous medium.  

\subsubsection{The mean spectrum} \label{sect:MSpect} 
Using the Fourier transform 
\[
\hat \psi(\varOmega,\bkappa,z) = \int_{\RR^2} d \bX \, \psi(\varOmega,\bX,z) e^{-i \bkappa \cdot \bX},
\]
the change of coordinates \eqref{eq:coordC} and the definition \eqref{eq:SPC3} of the Wigner transform, we can calculate the mean spectrum as follows
\begin{align*}
\EE \big[|\hat \psi(\varOmega,\bkappa,z)|^2 \big] &= \int_{\RR^2} d \bX_1 \int_{\RR^2} d \bX_2 \, \EE \big[\psi(\varOmega,\bX_1,z) \overline{\psi(\varOmega,\bX_2,z)}\big] e^{i \bkappa 
\cdot (\bX_2-\bX_1)} \nonumber \\
&=  \int_{\RR^2} d \obX \int_{\RR^2} d \tbX \, \cC_\varOmega \Big(\obX + \frac{\tbX}{2},\obX - \frac{\tbX}{2},z \Big) e^{-i \bkappa \cdot \tbX} \nonumber \\
&= \int_{\RR^2} d \obX \, \cW_\varOmega(\obX,\bkappa,z),
 \end{align*}
with right-hand side given in Proposition \ref{prop.3}. 
We then obtain the following result.
\begin{proposition}
In the regime (\ref{eq:SMF}), the mean spectrum is of the form
\begin{align}
\EE \big[|\hat \psi(\varOmega,\bkappa,z)|^2\big] & \simeq \frac{(2 \pi)^2 \hat \cW_{\varOmega,0}({\bf 0},{\bf 0})}{Q^2(z)} \Psi_\alpha \Big(\frac{\bkappa}{Q(z)} \Big),
\label{eq:meanSpectr}
\end{align}
with $\Psi_\alpha$ defined in \eqref{eq:defPsialpha}.
\end{proposition}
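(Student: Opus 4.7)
The plan is to mirror the argument used for the mean intensity but in Fourier dual variables. I would start from the identity already displayed in the excerpt,
\begin{equation*}
\EE \big[|\hat \psi(\varOmega,\bkappa,z)|^2\big] = \int_{\RR^2} d \obX \, \cW_\varOmega(\obX,\bkappa,z),
\end{equation*}
and substitute the explicit formula \eqref{eq:expressW2} for $\cW_\varOmega$. The integration in $\obX$ acts only on the factor $\exp[i\bq\cdot(\obX-\bkappa z/k(\varOmega))]$ and produces $(2\pi)^2 \delta(\bq)$, which collapses the $\bq$ integral and leaves
\begin{equation*}
\EE \big[|\hat \psi(\varOmega,\bkappa,z)|^2\big] = \int_{\RR^2} d \tbX \, e^{-i\bkappa\cdot\tbX}\, \hat{\cW}_{\varOmega,0}({\bf 0},\tbX) \exp\Big[-\frac{z\,k^2(\varOmega)}{4}\,\Theta(\tbX)\Big],
\end{equation*}
since the range integral $\int_0^z dz'\,\Theta(\tbX+\bq z'/k(\varOmega))$ degenerates to $z\,\Theta(\tbX)$ at $\bq = {\bf 0}$.

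Next I would invoke the asymptotic expansion $\Theta(\tbX)\sim d_\alpha |\tbX|^\alpha$ from \eqref{def:Gamma} to replace the damping by $d_\alpha |\tbX|^\alpha$, then change variables $\tbX = \bu/Q(z)$. Using the defining relation $Q^\alpha(z) = d_\alpha k^2(\varOmega)\,z$, the exponent becomes $-|\bu|^\alpha/4$ and the Jacobian yields the prefactor $1/Q^2(z)$, leading to
\begin{equation*}
\EE \big[|\hat \psi(\varOmega,\bkappa,z)|^2\big] \simeq \frac{1}{Q^2(z)}\int_{\RR^2} d\bu \, \hat{\cW}_{\varOmega,0}\!\Big({\bf 0},\frac{\bu}{Q(z)}\Big)\, \exp\Big[-i\,\frac{\bkappa\cdot\bu}{Q(z)} - \frac{|\bu|^\alpha}{4}\Big].
\end{equation*}
Because the exponential localizes $|\bu|$ at $O(1)$, the second argument of $\hat{\cW}_{\varOmega,0}$ stays of order $1/Q(z)$, which is much smaller than the support radius $r_{\rm s}$ of $\hat F(\varOmega,\cdot)$ by the scaling regime \eqref{eq:SMF}. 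Consequently $\hat{\cW}_{\varOmega,0}({\bf 0},\bu/Q(z))$ may be replaced by $\hat{\cW}_{\varOmega,0}({\bf 0},{\bf 0})$, and the remaining integral, together with the symmetry of the integrand under $\bu\mapsto -\bu$, reproduces $(2\pi)^2 \Psi_\alpha(\bkappa/Q(z))$ by the definition \eqref{eq:defPsialpha}.

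The main obstacle is justifying the two approximations quantitatively. First, the substitution $\Theta(\tbX)\to d_\alpha|\tbX|^\alpha$ is valid only for $|\tbX| \gg l_o$, whereas the integration runs over all of $\RR^2$; the scaling $l_o \ll 1/Q(z)$ in \eqref{eq:SMF} ensures that only $|\tbX|\sim 1/Q(z) \gg l_o$ contribute significantly, so the small-$\tbX$ error is controlled. Second, replacing $\hat{\cW}_{\varOmega,0}({\bf 0},\bu/Q(z))$ by its value at the origin requires the smoothness of the initial Wigner transform in its offset argument on the scale $r_{\rm s}$, which is guaranteed by $r_{\rm s} \gg 1/Q(z)$. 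Both of these are exactly the content of the strong-fluctuation regime \eqref{eq:SMF}, so the argument proceeds in parallel to the mean intensity calculation, with the roles of $R(z)$ and $Q(z)$ interchanged.
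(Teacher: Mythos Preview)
Your proposal is correct and follows essentially the same route as the paper: integrate the explicit Wigner formula over $\obX$ to produce a $\delta(\bq)$, reduce to a single $\tbX$ integral with the damping $\exp[-\tfrac{z k^2}{4}\Theta(\tbX)]$, apply the asymptotic $\Theta(\tbX)\sim d_\alpha|\tbX|^\alpha$, rescale by $Q(z)$, and then freeze $\hat\cW_{\varOmega,0}({\bf 0},\cdot)$ at the origin using $1/Q(z)\ll r_{\rm s}$. Your added remarks on why the regime \eqref{eq:SMF} controls both approximation errors (the small-$\tbX$ region where $\Theta\not\approx d_\alpha|\cdot|^\alpha$ and the freezing of $\hat\cW_{\varOmega,0}$) are consistent with the paper's justification.
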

\begin {proof}
Using the asymptotic expansion  \eqref{def:Gamma} of $\Theta$ and integrating over $\obX$ and $\bq$ we get 
\begin{align*}
\EE \big[|\hat \psi(\varOmega,\bkappa,z)|^2 \big] & =  \int_{\RR^2} d \tbX \, \hat \cW_{\varOmega,0}({\bf 0},\tbX )\exp\Big[ - i \bkappa \cdot \tbX - 
\frac{d_\alpha k^2(\varOmega)  z |\tbX|^\alpha}{4} \Big] \nonumber \\
&=\frac{1}{Q^2(z)} \int_{\RR^2} d \bu \, \hat \cW_{\varOmega,0} \Big({\bf 0},\frac{\bu}{Q(z)} \Big) \exp\Big[- i \frac{\bkappa \cdot \bu}{Q(z)} - \frac{|\bu|^\alpha}{4} \Big],
\end{align*}
with $Q$ defined in \eqref{eq:defQ}. Arguing as before, since only $|\bu| = O(1)$ contributes to the integral, due to the exponential, the argument of 
$\hat \cW_{\varOmega,0}$ satisfies
\[
\frac{|\bu|}{Q(z)} = O\left(Q^{-1}(z)\right) \ll r_{\rm s},
\]
and we can approximate the mean spectrum by 
 (\ref{eq:meanSpectr}).
\end{proof}

This result shows that the scale $Q$ quantifies the support of the spectrum, so we call it the ``spectral radius" at range $z$. The initial spectral radius is $O(r_{\rm s}^{-1})$, 
but due to scattering  in the random medium it becomes significantly larger at $O(1)$ range, per equation \eqref{eq:SMF}.  This goes hand in hand with 
 the broadening of the beam described by equation \eqref{eq:MeanInt}.

\subsubsection{The spatial covariance function}
In the strong fluctuation regime (\ref{eq:SMF}) it is possible to express the covariance in terms of the beam radius $R$ and wave vector radius $Q$ as follows.

\begin{proposition}
\label{prop:SpCov}
In the regime (\ref{eq:SMF}), 
the covariance has the form
\begin{align}
\cC_\varOmega\Big(\obX+\frac{\tbX}{2},\obX-\frac{\tbX}{2},z\Big) \simeq \frac{\hat \cW_{\varOmega,0}({\bf 0},{\bf 0})}{R^2(z)} \Phi_\alpha \Big(\frac{\obX}{R(z)},
\tbX Q(z) \Big), \label{eq:CovSMF}
\end{align}
with the function
\begin{align}
\Phi_\alpha(\bxi,\bzeta) = \frac{1}{(2\pi)^2} \int_{\RR^2} d \bu \, \exp \left[ i \bu \cdot \bxi - \frac{(1+\alpha)}{4} 
\int_0^1 ds \, \Big| \frac{\bzeta}{(1+\alpha)^{1/\alpha}} - \bu s \Big|^{\alpha} \right].
\label{eq:defPhiAlpha}
\end{align}
\end{proposition}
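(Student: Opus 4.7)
The starting point is the explicit formula \eqref{eq:Covariance} for $\cC_\varOmega(\obX+\tbX/2,\obX-\tbX/2,z)$. My plan is to combine three ingredients: the asymptotic expansion \eqref{def:Gamma} of $\Theta$, a rescaling of the integration variable by the beam radius $R(z)$, and the localization of $\hat{\cW}_{\varOmega,0}$ implied by the regime \eqref{eq:SMF}.

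First I would substitute $\Theta(\by)\sim d_\alpha|\by|^\alpha$ into the exponent of \eqref{eq:Covariance}, so that
\[
-\frac{k^2(\varOmega)}{4}\int_0^z dz'\,\Theta\Big(\tbX-\frac{\bq(z-z')}{k(\varOmega)}\Big) \simeq -\frac{d_\alpha k^2(\varOmega) z}{4}\int_0^1 ds\,\Big|\tbX-\frac{\bq z s}{k(\varOmega)}\Big|^\alpha,
\]
after the change of variable $s=(z-z')/z$. Then I would change integration variable $\bq=\bu/R(z)$ and exploit the identity
\[
\frac{z\,Q(z)}{k(\varOmega)R(z)}=(1+\alpha)^{1/\alpha},
\]
which is a direct consequence of definitions \eqref{eq:defR}, \eqref{eq:defQ}. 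This converts $\bq z/k(\varOmega)$ into $(1+\alpha)^{1/\alpha}\bu/Q(z)$ and, setting $\bxi=\obX/R(z)$ and $\bzeta=\tbX Q(z)$, turns the exponent into
\[
-\frac{d_\alpha k^2(\varOmega) z}{4\,Q^\alpha(z)}\int_0^1 ds\,\big|\bzeta-(1+\alpha)^{1/\alpha}\bu s\big|^\alpha =-\frac{1+\alpha}{4}\int_0^1 ds\,\Big|\frac{\bzeta}{(1+\alpha)^{1/\alpha}}-\bu s\Big|^\alpha,
\]
using $Q^\alpha(z)=d_\alpha k^2(\varOmega)z$ and pulling $(1+\alpha)^{1/\alpha}$ out of the absolute value. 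The Jacobian $d\bq=d\bu/R^2(z)$ produces the prefactor $1/R^2(z)$ and the phase becomes $e^{i\bu\cdot\bxi}$.

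To close the argument I would replace $\hat{\cW}_{\varOmega,0}\bigl(\bq,\tbX-\bq z/k(\varOmega)\bigr)$ by $\hat{\cW}_{\varOmega,0}({\bf 0},{\bf 0})$. As in the proofs of \eqref{eq:MeanInt} and \eqref{eq:meanSpectr}, the first argument is $\bu/R(z)=O(R^{-1}(z))\ll r_{\rm s}^{-1}$, so it lies well inside the $O(r_{\rm s}^{-1})$ wave-vector support of $\hat{\cW}_{\varOmega,0}$ in its first slot; the second argument, in the regime where the covariance is nontrivial (i.e.\ $|\tbX|\lesssim 1/Q(z)$), is of order $1/Q(z)\ll r_{\rm s}$ and is therefore well inside the $O(r_{\rm s})$ spatial support in the second slot. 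Hence $\hat{\cW}_{\varOmega,0}$ is essentially constant on the region that contributes to the integral, and pulling it out yields exactly \eqref{eq:CovSMF}--\eqref{eq:defPhiAlpha}.

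The main technical obstacle is the legitimacy of replacing $\Theta$ by its $|\bX|\gg l_o$ asymptotics uniformly under the integral: the argument $\tbX-\bq(z-z')/k(\varOmega)$ can in principle be small. After the rescaling, this argument has size $O(1/Q(z))$ on the dominant region $|\bu|=O(1)$, and the left inequality in \eqref{eq:SMF}, $l_o\ll 1/Q(z)$, guarantees we are in the regime where \eqref{def:Gamma} applies; the negligible contribution from the tails $|\bu|\gg 1$ is controlled by the stretched-exponential decay produced by the $(1+\alpha)/4$ integral. This quantitative justification, rather than the algebra itself, is what needs care.
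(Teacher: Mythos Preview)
Your proposal is correct and follows essentially the same route as the paper's proof: start from \eqref{eq:Covariance}, substitute the asymptotic form \eqref{def:Gamma} of $\Theta$, change variables $s=(z-z')/z$ and $\bu=R(z)\bq$, and then use the regime \eqref{eq:SMF} to freeze $\hat{\cW}_{\varOmega,0}$ at $({\bf 0},{\bf 0})$. Your added discussion of the validity of the $\Theta$-asymptotics under the integral (via $l_o\ll 1/Q(z)$) is a point the paper leaves implicit.
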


\begin{proof}
Starting from equation \eqref{eq:Covariance}, using the asymptotic expansion \eqref{def:Gamma}, changing variables as 
$
s =(z-z')/{z}$ and $ \bu = R \bq,
$
and using definitions \eqref{eq:defR} and \eqref{eq:defQ} we get 
\begin{align*}
\cC_\varOmega\Big(\obX+\frac{\tbX}{2},\obX-\frac{\tbX}{2},z\Big) = \frac{1}{(2 \pi)^2R^2(z)} \int_{\RR^2}d \bu \, 
\hat \cW_{\varOmega,0}\Big( \frac{\bu}{R(z)},\tbX - \frac{\bu z}{k(\varOmega) R(z)} \Big) \nonumber \\
\times 
\exp \left[ i \frac{\bu \cdot \obX}{R(z)} - \frac{(1+\alpha)}{4} \int_{0}^1 ds \, \left| \frac{\tbX Q(z)}{(1+\alpha)^{1/\alpha}} - \bu s \right|^\alpha\right].
\end{align*}
Again, we conclude that only $|\bu| = O(1)$ contributes to the integral, due to the decaying exponential, so under the strong fluctuations assumption
\eqref{eq:SMF} we can make the approximation
\[
\hat \cW_{\varOmega,0}\Big( \frac{\bu}{R(z)},\cdot\Big) \approx \hat \cW_{\varOmega,0}({\bf 0},\cdot).
\]
We also get from definitions (\ref{eq:defR}-\ref{eq:defQ}) and the assumption \eqref{eq:SMF} the following estimates
\[
\frac{|\bu| z}{k(\varOmega) R(z)} = O \left(Q^{-1}(z)\right) \ll r_{\rm s}, \qquad |\tbX| = O \left(Q^{-1}(z)\right) \ll r_{\rm s}.
\]
Here we used that  $|\tbX| Q = O(1)$ in order for the exponential to be large. Therefore, the covariance can be approximated by (\ref{eq:CovSMF}).
\end{proof}

Note that  $\Phi_\alpha(\bxi,{\bf 0}) = \Psi_\alpha(\bxi)$, 
with $\Psi_\alpha$ given in \eqref{eq:defPsialpha}, and we also have 
\begin{equation}
\int_{\RR^2} d \bxi \int_{\RR^2} d \bzeta \, \Phi_\alpha(\bxi,\bzeta) e^{-i \bkappa \cdot \bzeta} = (2 \pi)^2 \Psi_\alpha (\bkappa).
\end{equation}
Contrarily to the function $\bxi \mapsto \Phi_\alpha(\bxi,{\bf 0})$ that is smooth at ${\bf 0}$ by (\ref{eq:expandpsialpha}), the function $\bzeta \mapsto \Phi_\alpha({\bf 0},\bzeta)$ has a cusp at ${\bf 0}$ (see Appendix~\ref{app:C}):
\begin{equation}
\label{eq:expandPhizeta}
\Phi_\alpha({\bf 0},\bzeta) = \Phi_\alpha({\bf 0},{\bf 0}) 
\Big( 1- r_\alpha |\bzeta|^{\alpha+1} +o(|\bzeta|^{\alpha+1})\Big),
\end{equation}
where $r_\alpha$ is given by 
\begin{equation}
r_\alpha = \frac{\alpha}{2^{2+2/\alpha} (1+\alpha)^{1/\alpha+2}}
\frac{\Gamma(1/\alpha) \Gamma(1/2-\alpha/2)\Gamma(1+\alpha/2)}{\Gamma(2/\alpha) \Gamma(1/2+\alpha/2) \Gamma(1-\alpha/2)}.
\end{equation}
%This can also be seen from the identity $\int  \Phi_\alpha(\bxi,\bzeta) d\bxi = \exp (-  |\bzeta|^\alpha/ 4)$.
This implies that the covariance (\ref{eq:CovSMF}) has a cusp at ${\bf Y}= {\bf 0}$.

We plot the marginals $\bxi \mapsto \Phi_\alpha(\bxi,{\bf 0})$ and $\bzeta \mapsto \Phi_\alpha({\bf 0},\bzeta)$ in section 
\ref{sect:Comparison}. 
They peak at the origin and are negligible outside a disk with $O(1)$ radius.
We conclude therefore, from \eqref{eq:CovSMF},  that $Q^{-1}$ quantifies the length scale of decorrelation over the spatial offsets $\tbX$ at range $z$, so we can refer to it as the ``decoherence length"
\begin{equation}
\mathfrak{X}(z) = Q^{-1}(z) = O \left( (d_\alpha z)^{-1/\alpha} k^{-2/\alpha}(\varOmega) \right).
\end{equation}
This is proportional to the wavelength raised to the power $2/\alpha$, and it decreases with the range $z$ and with the 
strength of the random medium, quantified by $d_\alpha$.

\subsection{The frequency covariance function}
\label{sect:CovFreq}
The leading factor in the frequency decorrelation of $\varphi^\eps$ at $O(1)$ range is the random phase $k (\varOmega){\cal Z}^\eps$. Indeed, we obtain from Propositions \ref{prop.1}--\ref{prop.2} that this phase gives a significant contribution to the covariance for $O(\eps^\alpha)$ frequency offsets,
\begin{align}
\EE \bigg[ \varphi^\eps \Big(\varOmega + \frac{\eps^\alpha \tilde{\varOmega}}{2}, \obX + \frac{\tbX}{2},z\Big) \overline{\varphi^\eps \Big(\varOmega - \frac{\eps^\alpha \tilde{\varOmega}}{2}, \obX -\frac{\tbX}{2},z\Big)}\bigg]\stackrel{\eps \to 0}{\longrightarrow} \exp\bigg[ - \frac{\tilde{\varOmega}^2 C_H^2 z^{2 H}}{2c_o^2}\bigg] \nonumber \\
 \times \cC_\varOmega\Big(\obX + \frac{\tbX}{2},\obX - \frac{\tbX}{2},z\Big). \label{eq:CF1}
\end{align}
This contribution is described by the Gaussian in $\tilde{\varOmega}$, whose standard deviation  defines the decoherence frequency
\begin{equation}
\varOmega_{\varphi^\eps}(z)= \frac{c_o \eps^\alpha}{C_H z^H} %= O \Big( \frac{\eps^\alpha}{\sqrt{d_\alpha z^{(1+\alpha)}}} \Big)
, \label{eq:CF2}
\end{equation}
which decreases with the range $z$ and with the strength of the random medium, quantified by $d_\alpha$ (see Proposition \ref{prop.1} for the definitions of $H$ and $C_H$).

If the random phase is removed from $\varphi^\eps$ (which means, we observe the field around {the central axis random arrival time}), then the decoherence frequency is larger and it is described in the limit $\eps \to 0$ by the decay in $|\varOmega_1-\varOmega_2|$ of the covariance 
\begin{equation}
\cC (\varOmega_1,\varOmega_2,\bX_1,\bX_2,z ) = 
\EE \left[ \psi  (\varOmega_1,\bX_1,z) \overline{\psi (\varOmega_2,\bX_2,z)}\right].
\label{eq:CF3}
\end{equation}
The evolution equation for this covariance is obtained from the It\^{o}-Schr\"{o}dinger equation in Proposition \ref{prop.2} and the definitions 
\eqref{eq:defgamma}, \eqref{def:Theta},
\begin{align}
&\partial_z \cC (\varOmega_1,\varOmega_2,\bX_1,\bX_2,z ) = \left\{ \frac{i}{2 k_1} \Delta_{\bX_1} - 
\frac{i}{2 k_2} \Delta_{\bX_2} - \left[ \frac{k_1k_2}{4} \Theta(\bX_1-\bX_2)  \right. \right.\nonumber \\ 
&~~\left. \left. + 
 \frac{k_1(k_1-k_2)}{4} \Theta(\bX_1) - \frac{k_2(k_1-k_2)}{4} \Theta(\bX_2) \right] \right\}
  \cC (\varOmega_1,\varOmega_2,\bX_1,\bX_2,z ), \label{eq:CF4}
\end{align}
for $z > 0$, with initial condition 
\begin{equation}
\cC (\varOmega_1,\varOmega_2,\bX_1,\bX_2,z=0) = \hat F(\varOmega_1,\bX_1) \overline{\hat F(\varOmega_2,\bX_2)}. \label{eq:CF5}
\end{equation}
Here we used the notation $k_j = k(\varOmega_j)$, for $j = 1,2$.

The next proposition, proved in Appendix \ref{ap:B}, gives the approximation of $\cC$ 
in the strongly fluctuation  regime  \eqref{eq:SMF}. 
Since we have already described the spatial decorrelation of the wave field in the previous section, we give the approximation at the axis of the beam.

\rc{
\begin{prop}
\label{prop.4}
In the regime (\ref{eq:SMF}), 
the decoherence frequency
\begin{equation}
\varOmega_{\psi} (z;\varOmega) = \frac{2 \varOmega}{ Q(z) R(z)} %  \ll \varOmega,
\label{eq:DecfreqPsi}
\end{equation}
is the scale of variation of the covariance  of $\psi$ with respect to the frequency offset $\varOmega_1 - \varOmega_2$ around the frequency $\varOmega$.
More exactly, the covariance evaluated at $\bX_1 = \bX_2 = {\bf 0}$ and at two positive frequencies $\varOmega_1$, $\varOmega_2$ such that $|\varOmega_1-\varOmega_2|
\lesssim \varOmega_{\psi} (z;\varOmega)$ with  $\varOmega=(\varOmega_1+\varOmega_2)/2$ is of the form 
\begin{align}
\cC (\varOmega_1,\varOmega_2,{\bf 0},{\bf 0},z) \simeq  \frac{ {\cF}(\varOmega _1,\varOmega _2) }{R^2(z)} 
\Xi_\alpha  \Big(\frac{\varOmega_1-\varOmega_2 }{ \varOmega_\psi(z;\varOmega)} \Big),
\label{eq:prop4}
\end{align}
with
\begin{equation}
 {\cF}(\varOmega _1,\varOmega _2) = 
\int_{\RR^2} d \obX \, \hat F (\varOmega_1, \obX ) \overline{\hat F (\varOmega_2,\obX  )},
 \label{eq:expressW2IC2f}
\end{equation} 
and  $\Xi_\alpha$ is a function that depends only on $\alpha$. It is defined in equation \eqref{eq:defXi} below 
for dimensionless, $O(1)$ arguments. 
\end{prop}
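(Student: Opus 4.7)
The plan is to solve the evolution equation (\ref{eq:CF4})--(\ref{eq:CF5}) via the Wigner/characteristic method of Proposition~\ref{prop.3} (Appendix~\ref{ap:A}), carefully tracking the terms proportional to $k_1-k_2$, and then to specialize to $\bX_1 = \bX_2 = {\bf 0}$ and rescale so as to isolate the decoherence frequency.

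First I would introduce centered and offset coordinates $\obX=(\bX_1+\bX_2)/2$, $\tbX = \bX_1-\bX_2$ and write $k_\pm = (k_1\pm k_2)/2$. In these variables the differential part of (\ref{eq:CF4}) reduces to $\frac{i k_+}{k_1 k_2}\partial_\obX\cdot\partial_\tbX$ (the usual paraxial transport) plus corrections proportional to $k_-$ times $\Delta_\tbX$ and $\Delta_\obX$, while the potential splits as $\frac{k_1 k_2}{4}\Theta(\tbX)$ (already handled in Proposition~\ref{prop.3}) plus an antisymmetric piece $\frac{k_+ k_-}{2}\bigl[\Theta(\obX+\tbX/2)-\Theta(\obX-\tbX/2)\bigr] + O(k_-^2)$. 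Taking the Fourier transform in $\tbX$ and integrating along the characteristics $\obX'=\obX-\bkappa z'/k_+$ exactly as in Appendix~\ref{ap:A}, then replacing $\Theta$ with its asymptotic form $d_\alpha|\cdot|^\alpha$ from (\ref{def:Gamma}), which is justified in the regime (\ref{eq:SMF}), makes all accumulated line integrals explicit and $\alpha$-homogeneous.

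Next I set $\bX_1 = \bX_2 = {\bf 0}$ and Fourier-invert, then rescale the integration variables by $R(z)$ and $Q(z)$ as in the proof of Proposition~\ref{prop:SpCov}. Under the strong fluctuation assumption (\ref{eq:SMF}), the source transforms factor out to produce $\cF(\varOmega_1,\varOmega_2)$ from (\ref{eq:expressW2IC2f}). The $\Theta(\tbX)$ contribution reproduces the $\alpha$-homogeneous structure of $\Phi_\alpha$, while the $k_-$-proportional contributions (from both the potential and the Laplacian corrections) combine, after rescaling, into an exponential whose argument depends on the frequency offset only through the dimensionless combination $(\varOmega_1-\varOmega_2) R(z) Q(z)/(2\varOmega) = (\varOmega_1-\varOmega_2)/\varOmega_\psi(z;\varOmega)$. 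Collecting terms yields (\ref{eq:prop4}) together with an explicit integral representation for $\Xi_\alpha$ depending only on $\alpha$.

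The main obstacle I anticipate is the lack of translation invariance in $\obX$ of the terms $\Theta(\obX\pm\tbX/2)$, which do not diagonalize in the Wigner representation as cleanly as the $\Theta(\tbX)$ term did in Appendix~\ref{ap:A}. The simplification that rescues the argument is that, in the rescaled coordinates $\obX = R(z)\bxi$ and $\tbX = \bzeta/Q(z)$ with $Q(z) R(z) \gg 1$, one has $|\tbX|/2 \ll |\obX|$, so these terms can be expanded about $\obX$ and combined, using the $\alpha$-homogeneity of $|\cdot|^\alpha$, into a single controlled contribution to the exponent, yielding the explicit dimensionless form (\ref{eq:defXi}) of $\Xi_\alpha$.
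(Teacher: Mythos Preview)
Your plan has a genuine gap. In the standard coordinates $\obX=(\bX_1+\bX_2)/2$, $\tbX=\bX_1-\bX_2$, the differential part of (\ref{eq:CF4}) carries, besides the transport $\frac{ik_+}{k_1k_2}\nabla_\obX\cdot\nabla_\tbX$, the correction $-\frac{ik_-}{k_1k_2}\Delta_\tbX$, and this term is \emph{not} subleading: with $\tbX \sim Q^{-1}(z)$ and $k_- \sim k/[Q(z)R(z)]$ its accumulated contribution over range $z$ is $z\, k_- Q^2/k^2 \sim zQ/(kR) = (1+\alpha)^{1/\alpha}=O(1)$. Hence you cannot integrate along characteristics ``exactly as in Appendix~\ref{ap:A}''. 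If you Fourier-transform in $\obX$ (which is what Appendix~\ref{ap:A} actually does), the $\Delta_\tbX$ term remains a Laplacian and the equation along each characteristic is a Schr\"odinger equation in $\tbX$ with $z'$-dependent potential, not an ODE; if instead you Fourier-transform in $\tbX$ (as you write), the damping $\Theta(\tbX)$ becomes a convolution in $\bkappa$ that couples characteristics. Either way there are no closed-form ``accumulated line integrals'', and the promised explicit integral representation for $\Xi_\alpha$ is an overclaim: even in the paper, $\Xi_\alpha$ is defined only through the solution of the radiative-transfer-type equation (\ref{eq:B11}), which is not solvable in closed form for $\tilde k\neq 0$.

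The paper's device for removing this obstruction is the frequency-dependent change of variables $\bX_1=\sqrt{k/k_1}\,(\obX+\tbX/2)$, $\bX_2=\sqrt{k/k_2}\,(\obX-\tbX/2)$, under which $\frac{i}{2k_1}\Delta_{\bX_1}-\frac{i}{2k_2}\Delta_{\bX_2}$ becomes \emph{exactly} $\frac{i}{k}\nabla_\obX\cdot\nabla_\tbX$, with no $\Delta_\obX$ or $\Delta_\tbX$ corrections whatsoever. The price is that $\Theta(\bX_1-\bX_2)$ now depends on $\obX$; after Fourier-transforming in $\tbX$ this manifests as the phase $e^{-i\tilde k\,\tbq\cdot\tilde{\obX}}$ inside the scattering kernel of (\ref{eq:B11}), and \emph{that} is where the $O(1)$ dependence on the dimensionless frequency offset sits. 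Your diagnosis of the ``main obstacle'' is therefore off target: the antisymmetric potential terms $\Theta(\obX\pm\tbX/2)$ contribute, after rescaling, only at order $(Q(z)R(z))^{\alpha-2}\ll 1$ and can simply be discarded (as the paper does), whereas the term you must actually confront is the $O(1)$ Laplacian correction to the transport---or, equivalently, the $\obX$-dependence that the paper's coordinate change transfers into the dominant damping.
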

}

\vspace{0.05in}

\rc{The  decoherence frequency $\varOmega_{\psi} (z;\varOmega) $ given by (\ref{eq:DecfreqPsi}) is proportional to the central frequency $\varOmega$, but it is much smaller  because $Q R \gg 1$ by \eqref{eq:SMF}.}
To define $\Xi_\alpha$, we  introduce the dimensionless and $O(1)$ variables
\begin{equation} 
\tilde \obX = \frac{\obX}{R(z)}, \qquad \tilde \bkappa = \frac{\bkappa}{Q(z)},
\end{equation}
where we anticipate the range dependent radii of spatial and wave vector support of the covariance, using the results in subsection \ref{sect:CovSpatial} and definitions  (\ref{eq:defR}-\ref{eq:defQ}). The range $z$ is fixed here, and we introduce the dimensionless $\tilde z \in [0,1]$, so that $z \tilde z \in [0,z]$.
With this notation we have 
\begin{align}
\label{eq:defXi}
\Xi_\alpha(\tilde{k}) =   \int_{\RR^2} d\tilde{\bkappa}  \, \tilde{\cal \cW}_\alpha(\tilde{k} , \tilde{\bX}={\bf0},\tilde{\bkappa}, \tilde{z}=1)
\end{align}
for dimensionless and $O(1)$ variable $\tilde k$, where $\tilde{\cal \cW}_\alpha$ satisfies
\begin{align}
\nonumber
\left[ \partial_{\tilde z} + \right. &\left. (1+\alpha)^{1/\alpha} \tilde \bkappa \cdot \nabla_{\tilde{\obX}} \right] \tilde \cW_\alpha (\tilde k, \tilde{\obX},\tilde{\bkappa},\tilde z) =
 \frac{2^\alpha \alpha \Gamma(1+\alpha/2)}{8 \pi\Gamma(1-\alpha/2) }
\int_{\RR^2} d \tbq |\tbq|^{-\alpha - 2} \\
&\hspace{1in} \times
 \big[  
 \tilde \cW_\alpha (\tilde k, \tilde{\obX},\tilde{\bkappa}- \tbq,\tilde z) e^{-  i \tilde k \tbq \cdot \tilde{\obX} } 
 -\tilde \cW_\alpha (\tilde k, \tilde{\obX},\tilde{\bkappa},\tilde z)\big], 
  \label{eq:B11}
\end{align}
at $\tilde z > 0$, with initial condition 
\begin{equation}
\tilde \cW_\alpha (\tilde k ,\tilde{\obX},\tilde{\bkappa},\tilde{z}=0) = \delta(\tilde{\obX}) \delta(\tilde \bkappa).
\label{eq:B12}
\end{equation}

By scaling out the range $z$, the beam radius $R$ and the wave vector radius $Q$, we made 
$\tilde \cW_\alpha$ and thus $\Xi_\alpha$ depend only on $\alpha$. Note that when $\tilde k = 0$, which corresponds to 
taking $\varOmega_1 = \varOmega_2 = \varOmega$ in \eqref{eq:prop4}, we recover the result in Proposition 
\ref{prop:SpCov}. Indeed, \eqref{eq:expressW2IC2f} becomes $\hat \cW_{\varOmega,0}({\bf 0},{\bf 0})$, 
per definition \eqref{eq:expressW2IC}, and solving explicitly \eqref{eq:B11} with a calculation similar to that in Appendix \ref{ap:A},  we get 
$$
\tilde \cW_\alpha (\tilde k =0 ,\tilde{\obX},\tilde{\bkappa},\tilde{z}=1) =  \frac{1}{(2\pi)^2} \int_{\RR^2} d \bzeta \, \Phi_\alpha(\tilde{\obX}, \bzeta)
e^{- i \bzeta \cdot \tilde{\bkappa}}  ,
$$ and 
$$
\Xi_\alpha(\tilde{k}=0)= \int_{\RR^2} d \tilde \bkappa\, \tilde \cW_\alpha (0 ,{\bf 0},\tilde{\bkappa},1)  = \int_{\RR^2} d \tilde \bkappa\, \frac{1}{(2\pi)^2} \int_{\RR^2} d \bzeta \, \Phi_\alpha({\bf 0}, \bzeta)
e^{- i \bzeta \cdot \tilde{\bkappa}}  =  \Phi_\alpha({\bf 0},{\bf 0}) .
$$

\subsection{Pulse deformation}
\label{sect:Pulse}
 % {\bf To be revisited with a Gaussian pulse}.
% We discuss how the 
%We assume again that $\alpha \in (0,1)$, $l_0=0$ and $L_0=+\infty$.
 The wave field {evaluated at the center of the beam and} observed around the {central axis random travel time }
 $z/c_o+\eps^4 {\cal Z}^\eps(z)/c_o$ is  
 \begin{align}
 \nonumber
 U^\eps ( T , z) &= u^\eps\Big( t=\frac{z}{c_o} +\eps^4  \frac{ {\cal Z}^\eps(z)}{c_o} + \eps^4 T, \bx={\bf 0}, z \Big) \\
 &=  \frac{c_o}{4\pi} \int_\RR d \varOmega \, \psi^\eps( \varOmega,{\bf 0},z) e^{-i \varOmega T} .
\end{align}
 In the limit $\eps \to0$ it converges in distribution to
 \begin{equation}
 U(T,z)  = \frac{c_o}{4\pi}\int_{\RR}  d \varOmega \,   \psi(\varOmega,{\bf 0},z) e^{- i \varOmega T},
\end{equation}
where $\psi$ is the solution of the It\^o-Schr\"odinger equation (\ref{eq:itoschro}) with the initial condition~\eqref{eq:itoschroIC}. 

If the source  has the Gaussian spectrum
 $$
 \hat{F} (\varOmega,\bX) = \frac{1}{B}  \Big[
 \exp\Big( - \frac{(\varOmega-\omega_o)^2}{2B^2}\Big)
 +\exp\Big( - \frac{(\varOmega+\omega_o)^2}{2B^2}\Big)
 \Big] S\Big(\frac{\bX}{r_{\rm s}}\Big),
 $$
 then the time-dependent wave field has the form
\begin{align}
U(T,z) =& e^{-i \omega_o T}\widetilde{U}(T,z) + c.c. \label{eq:defUTz}
\end{align}
Here ``c.c." is short notation for the complex conjugate of the first term, 
\begin{align}
\widetilde{U}(T,z) =& \frac{c_o}{4\pi}\int_{\RR}  e^{- i (\varOmega-\omega_o) T}
 \widetilde{\psi}(\varOmega,{\bf 0},z) d \varOmega, 
 \label{eq:tUTz}
\end{align}
and the field $\widetilde{\psi}$ solves the same equation \eqref{eq:itoschro} as $\psi$, but has the 
initial condition
\[
\widetilde{\psi}(\varOmega,\bX,z=0) = \frac{1}{B}  \exp\Big(-\frac{(\varOmega-\omega_o)^2}{2B^2}\Big)S\Big(\frac{\bX}{r_{\rm s}}\Big).
\]

To characterize the pulse profile, let us introduce the mean time-dependent intensity envelope 
\begin{align*}
 I(T,z) & = \EE[|\widetilde{U}(T,z)|^2] \\
 &= \frac{c_o^2}{(4\pi)^2 } \int_{\RR} d \varOmega_1 \int_{\RR} d \varOmega_2 \,  e^{- i(\varOmega_1 - \varOmega_2) T  }  
  \EE \big[ \widetilde{\psi}(\varOmega_1,{\bf 0},z)  \overline{ \widetilde{\psi}(\varOmega_2,{\bf 0},z) } \big]  .
 \end{align*}
In view of Proposition~\ref{prop.4}, 
%\footnote{\bc{I removed ``the initial beam radius $r_{\rm s}$ is larger than $c_o\varOmega_\psi^{-1}$" because I do not think we need it}} 
\rc{if  the condition \eqref{eq:SMF} holds and the bandwidth satisfies $B \lesssim\varOmega_\psi(z;\omega_o)$}, 
then we get 
\begin{align}
I(T,z) 
%= & \frac{c_o^2}{(4\pi)^2} \frac{\int_{\RR^2} |S(\bX/r_{\rm s})|^2 d\bX }{R^2(z;\omega_o) B^2}
%\iint_{\RR^2} \exp\Big( - \frac{\varOmega_1^2+\varOmega_2^2}{2B^2} - iT (\varOmega_1-\varOmega_2)\Big) \Xi_\alpha\Big( \frac{\varOmega_1-\varOmega_2 }{\varOmega_\psi(z;\omega_o) }  \Big) d\varOmega_1d\varOmega_2  \\
=& \frac{c_o^2}{{16} \pi^{3/2}} \frac{\int_{\RR^2} d \bX \, |S(\bX/r_{\rm s})|^2  }{B R^2(z) }
\int_{\RR} d \varOmega \, e^{ - \frac{\varOmega^2}{4B^2} - iT \varOmega}\,  \Xi_\alpha\Big( \frac{\varOmega  }{\varOmega_\psi(z;\omega_o) }  \Big).
\end{align}
This result shows that the pulse profile is affected by the random medium via the function $\Xi_\alpha$. For a narrowband pulse, with $B \ll \varOmega_\psi (z,\omega_o)$, the profile is preserved and we have
$$
 I(T,z) 
= \frac{c_o^2}{2\pi} \frac{\int_{\RR^2} d\bX \, |S(\bX/r_{\rm s})|^2  }{R^2(z)} \Phi_\alpha({\bf 0},{\bf 0}) \exp(-B^2 T^2).
 $$
It is only when $B $ is of the same order as $\varOmega_\psi(z,\omega_o)$ that the random medium induces pulse deformation.

\section{Comparison with the results in optics}
\label{sect:Comparison}
In this section we compare the expression of the mean intensity and spectrum  of the wave emerging from the asymptotic paraxial theory in random media and compare it with the results used in the optics literature \cite{andrews2005laser}. Because this literature considers 
time-harmonic waves, we limit the comparison to a fixed frequency $\varOmega$.

We analyzed the spatial covariance $\cC_\varOmega$  in subsection \ref{sect:CovSpatial} for $\alpha \in (0,1)$ and $L_o \to \infty$, where the process $\mu$ has long-range correlations. 
We showed there that the central phase $k(\varOmega) {\cal Z}^\eps$, which is influenced by such correlations, plays no role i.e., $\cC_\varOmega$ is the covariance of $\psi$, the $\eps \to 0$  limit (in distribution) of the wave field $\psi^\eps$ observed in the random travel time frame. Since $\psi^\eps$  experiences the random medium via the mixing process \eqref{eq:defNu}, the results in subsection \ref{sect:CovSpatial}  extend verbatim to the case $\alpha \in (0,1)\cup (1,2)$ and a finite $L_o$ (recall subsection \ref{sect:randmodel}). In particular, the results \eqref{eq:MeanInt}, \eqref{eq:meanSpectr} and \eqref{eq:CovSMF} remain valid as long as 
\begin{equation}
R(z) < L_o, \qquad Q(z) < l_o^{-1}.
\end{equation}

The formulas in \cite{andrews2005laser} are for the Kolmogorov spectrum of turbulence, corresponding to $\alpha = 5/3$.
The radius $R$ of the beam and the spectral radius $Q$ for this $\alpha$ are, from definitions (\ref{eq:defR}-\ref{eq:defQ}),
\begin{equation}
R(z) = \Big(\frac{3}{8} d_{5/3}\Big)^{3/5} z^{8/5} k^{1/5}(\varOmega), \qquad Q(z) = (d_{5/3})^{3/5} z^{3/5} k^{6/5}(\varOmega),
\label{eq:Co1}
\end{equation} 
and $d_{5/3}$ can be written in terms of the normalization constant $\chi_{5/3}$ of the  random process $\mu$ 
using equation \eqref{def:Gamma}
\begin{equation}
d_{5/3} = \frac{3 \Gamma(1/6)}{5 \pi 2^{8/3} \Gamma(11/6)} \chi_{5/3} \approx 0.178 \chi_{5/3}.
\label{eq:Co2}
\end{equation}
To compare with the formulas in \cite{andrews2005laser}, we note that in \cite[Section 3.3.1]{andrews2005laser} the power spectrum of the fluctuations $\tilde \mu$ of the index of refraction is\footnote{The power spectrum is called $\Phi_n$  in  \cite{andrews2005laser}, but 
to avoid confusion with the  function \eqref{eq:defPhiAlpha} we rename it $\mathbb{S}^{\rm A-P}$.}
\begin{equation}
\mathbb{S}^{^{\rm A-P}}\hspace{-0.04in}(\bkappa) = 0.033 C_n^2 |\bkappa|^{-11/3} {\bf 1}_{(L_o^{-1},l_o^{-1})}(|\bkappa|).
\label{eq:Co3}
\end{equation}
Since our process $\mu$ models the fluctuations of the squared index of refraction, we have $\mu \approx 2 \tilde \mu$. 
We also have a different convention of the Fourier transform, which can be reconciled by dividing the formulas in \cite{andrews2005laser} by $(2 \pi)^3$. Then, we obtain from definition \eqref{def:psdS} that our power spectrum $\mathbb{S}$ 
corresponds to \eqref{eq:Co3} at $\alpha = 5/3$, for the normalization constant
$
\chi_{5/3} = 4 (2 \pi)^3 0.033 C_n^2,
$
which gives, from \eqref{eq:Co2},
\begin{equation}
d_{5/3} \approx 5.828 C_n^2.
\end{equation}

\begin{figure}[t]
\begin{center}
\includegraphics[width=6cm]{./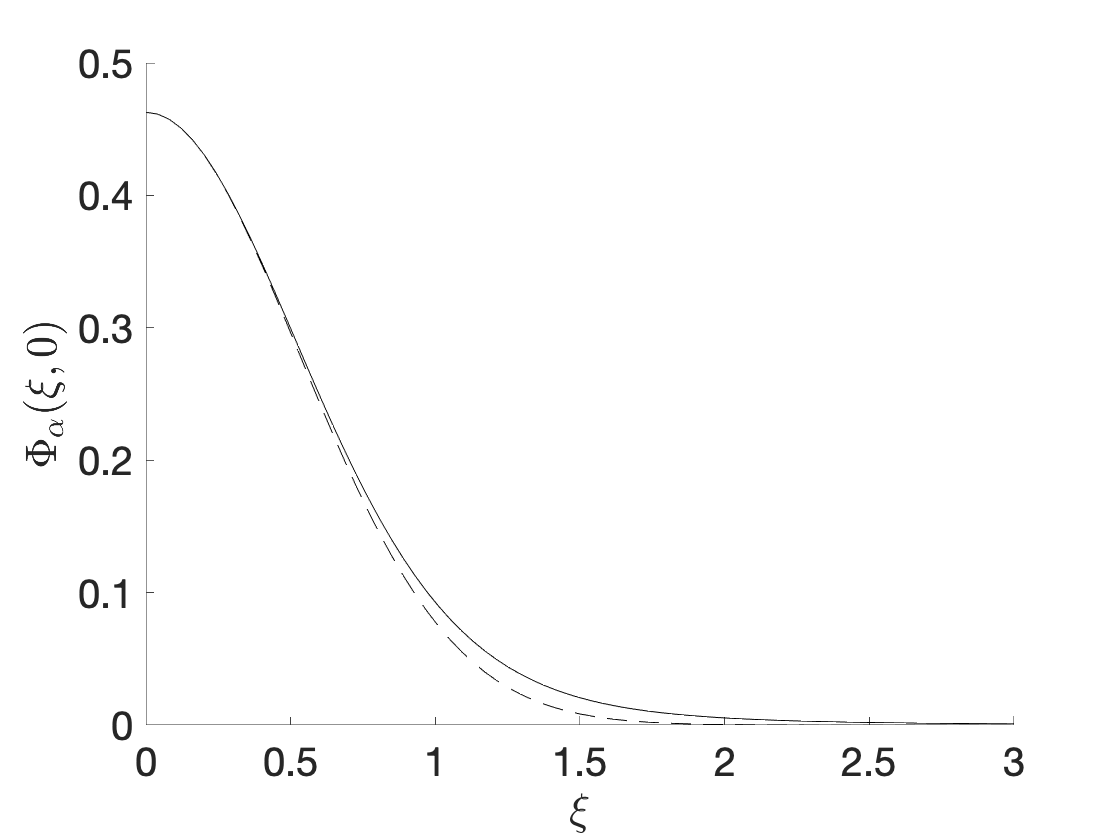}
\includegraphics[width=6cm]{./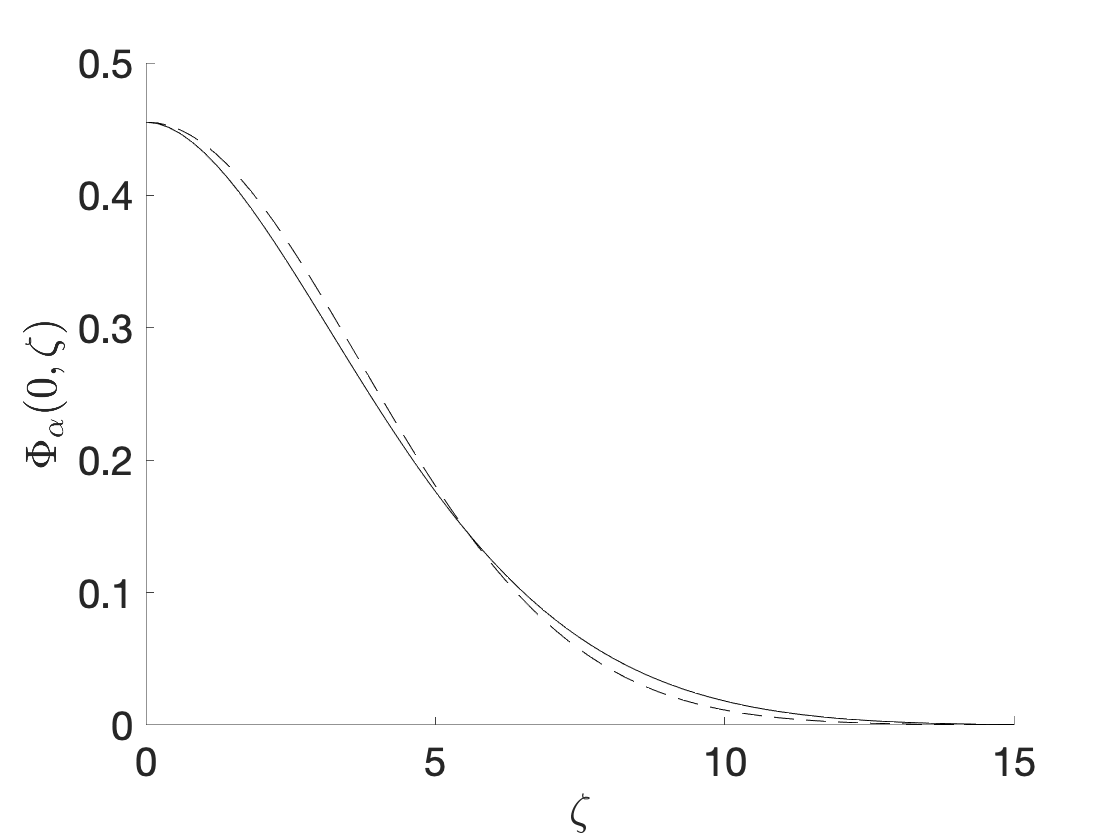} \\
\includegraphics[width=6cm]{./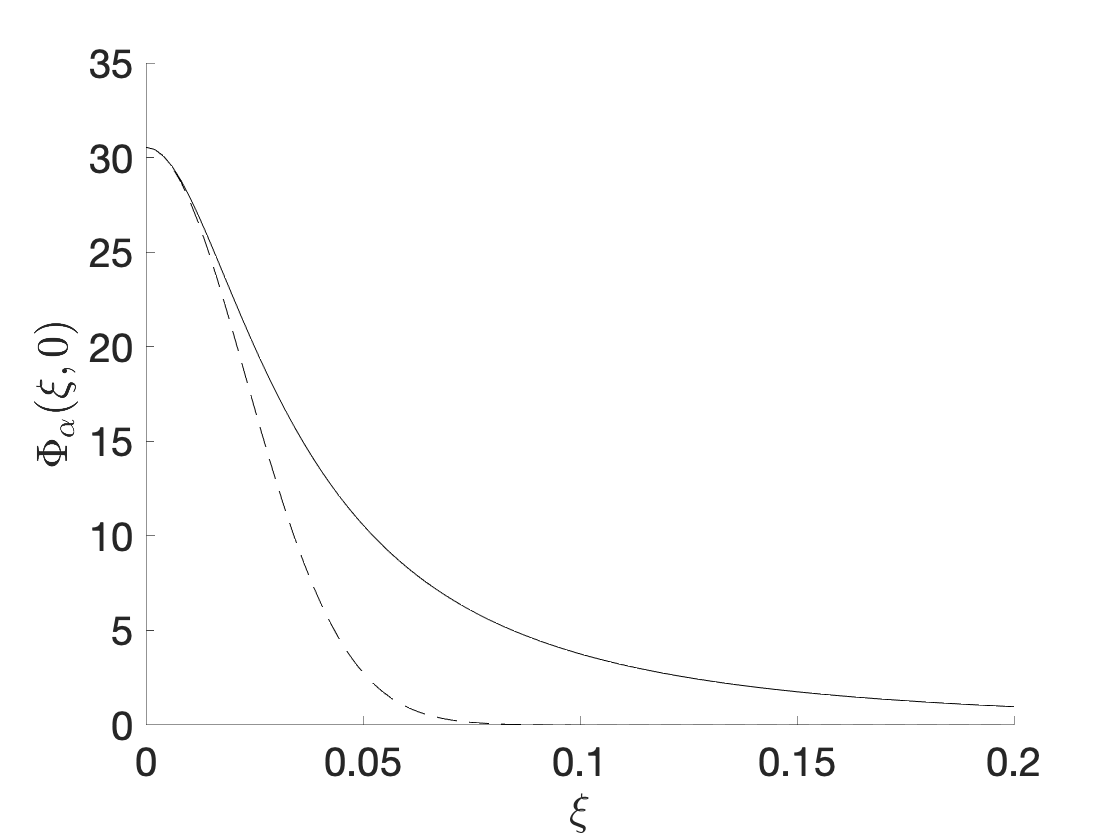}
\includegraphics[width=6cm]{./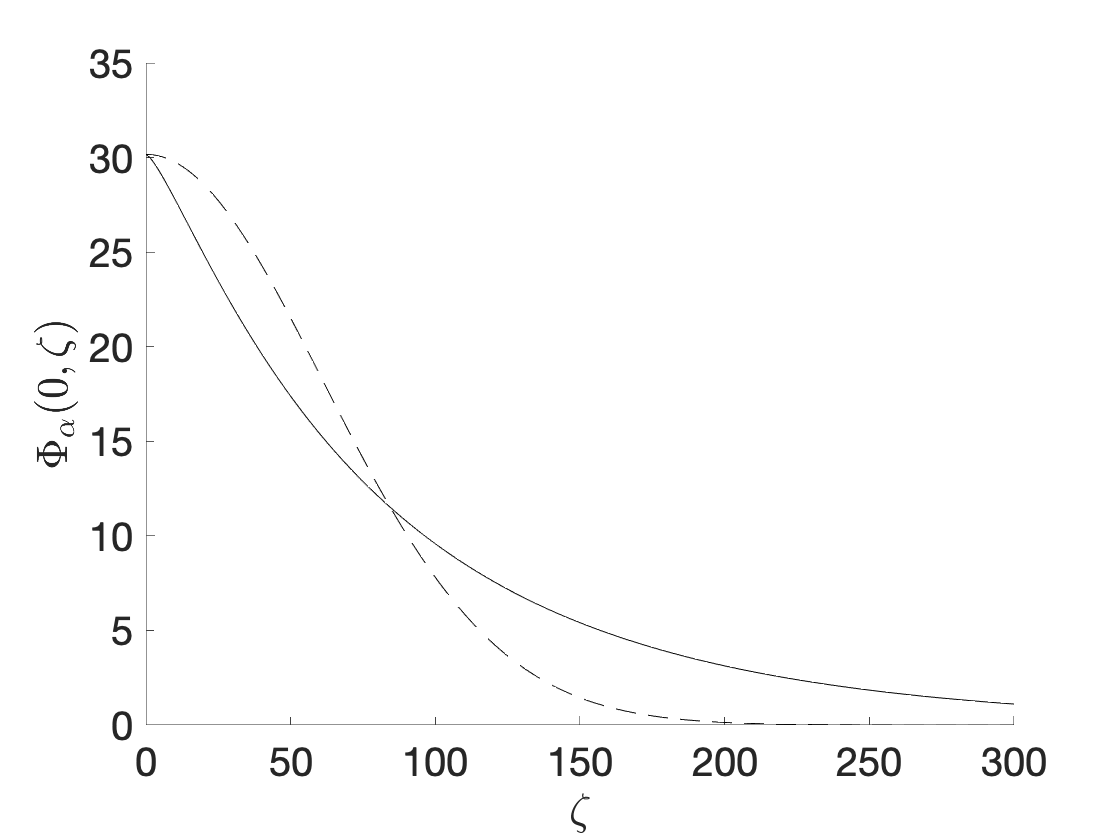}
\end{center}
\caption{
Left: Function $\bxi \mapsto \Phi_\alpha(\bxi,{\bf 0})$ (solid line) and the Gaussian fit $\bxi \mapsto \Phi_\alpha({\bf 0},{\bf 0}) \exp (- q_{\alpha} |\bxi|^2 )$ (dashed line; remember by (\ref{eq:expandpsialpha}) that $\Phi_\alpha(\bxi,{\bf 0})=\Phi_\alpha({\bf 0},{\bf 0}) \big[1-q_{\alpha}|\bxi|^2+o(|\bxi|^2)\big]$). 
Right:  Function $\bzeta \mapsto \Phi_\alpha({\bf 0},\bzeta)$ (solid line)  with the Gaussian fit $\bzeta \mapsto \Phi_\alpha({\bf 0},{\bf 0}) \exp (- |\bzeta|^2 / \zeta_\alpha^2 )$ (dashed line, with $\zeta_{2/3}=86$ and $\zeta_{5/3}=5.2$ determined by least-square fit). 
Top plots: $\alpha=5/3$. Here the Gaussian fits are close to the true  profiles. Bottom plots: $\alpha = 2/3$. Here the Gaussian fits are far from the true  profiles, which have heavy tails.}
\label{fig:Phialpha}
\end{figure}

We begin the comparison with the mean intensity, which is proportional to $\Psi_\alpha(\obX/R)  = \Phi_{\alpha}(\obX/R,{\bf 0})$ per equations \eqref{eq:MeanInt} and \eqref{eq:CovSMF}. This is approximated in \cite[Section 7.3.3]{andrews2005laser}  by a Gaussian
function, which is close to the true profile for $\alpha = 5/3$, as illustrated in the top left plot of Fig. \ref{fig:Phialpha}. In this figure, 
the standard deviation of the Gaussian is $(2 q_{5/3})^{-1/2}$ and $q_{5/3}$ can determined from the expansion (\ref{eq:expandpsialpha}) of $\Psi_{5/3}$ about the origin: $q_{5/3} = \frac{2^{2/5} \Gamma(12/5)}{\Gamma(6/5)} \approx 1.785$.
% Recalling the definition \eqref{eq:defPsialpha} and expanding the Bessel function $J_0$ around the origin we get 
%\begin{align*}
%\Psi_{5/3}(\bxi) &= \frac{1}{(2 \pi)} \int_0^\infty d |\bu| \left[ 1 - \frac{|\bxi|^2 |\bu|^2}{4} + o(|\bxi|^2) \right] e^{-\frac{|\bu|^{5/3}}{4}} \nonumber \\&= \Psi_{5/3}({\bf 0}) \left[ 1 - q_{5/3} |\bxi|^2 + o(|\bxi|^2) \right], \qquad q_{5/3} = \frac{2^{2/5} \Gamma(12/5)}{\Gamma(6/5)} \approx 1.785.
%\end{align*}
The radius of the support of the mean intensity defined in \cite[Section 7.3.3]{andrews2005laser}, aka the ``effective spotsize", corresponds to 
\begin{align}
\frac{R(z)}{\sqrt{q_{5/3}}} \approx \frac{ \big(\frac{3}{8} 5.828 C_n^2 \big)^{3/5} }{\sqrt{1.785}} z^{8/5} k^{1/5}(\varOmega) \approx 
1.2 C_n^{6/5} z^{8/5} k^{1/5}(\varOmega), \label{eq:SpotSize}
\end{align}
where we used equations (\ref{eq:Co1}-\ref{eq:Co2}). The effective spotsize is called $W_{\rm LT}$ in \cite{andrews2005laser} and its estimate follows from equations 
(35) and (45) in Section 7.3.3 and the ``Rytov variance" given in Section 7.1. It is given by $1.45 C_n^{6/5} z^{8/5} k^{1/5}$,
which looks like the theoretically derived formula \eqref{eq:SpotSize}, except for the multiplicative constant. Thus, the 
effective spotsize seems to be slightly over-estimated in \cite{andrews2005laser}.

Similarly, we can quantify the ``correlation radius",  which is defined in \cite{andrews2005laser} as the radius of support of the mean spectrum, which is according to equations (\ref{eq:meanSpectr}-\ref{eq:CovSMF}) proportional to $\Psi_{5/3}(\bkappa/Q) = \Phi_{5/3}({\bf 0},\bkappa/Q) $. This  is also  modeled 
as Gaussian in \cite{andrews2005laser}, which is close to the true profile for the  standard deviation $\zeta_{5/3}/\sqrt{2}$, $\zeta_{5/3}\approx 5.2$ (determined by least-square fit), as illustrated in the top right plot of Fig. \ref{fig:Phialpha}. 
The correlation radius is 
\begin{equation}
\frac{\zeta_{5/3}}{Q(z)} \approx \frac{\zeta_{5/3}}{(5.828 C_n^2)^{3/5}} z^{-3/5} k^{-6/5}(\varOmega) \approx 1.81 C_n^{-6/5} z^{-3/5} k^{-6/5}(\varOmega),
\label{eq:correlR}
\end{equation}
where we used equations (\ref{eq:Co1}-\ref{eq:Co2}). This is called $\rho_{\rm pl}$ in \cite[Section 7.3.4]{andrews2005laser} and it is estimated by $1.6 C_n^{-6/5} z^{-3/5} k^{-6/5}(\varOmega)$. Again, we see the similarity with the theoretically derived formula \eqref{eq:correlR}, except for the multiplicative constant that is slightly under-estimated.

Finally, we note that the Gaussian approximations of the mean intensity and spectrum are inadequate for the case $\alpha < 1$, as illustrated in the bottom plots of Fig. \ref{fig:Phialpha}. The theoretically derived formulas \eqref{eq:MeanInt} and \eqref{eq:meanSpectr}
display heavier tails than the best fit Gaussian profiles.

\section{Summary}
\label{sect:Summary}
% ------------------------------
 
 Kolmogorov's theory for optical  turbulence predicts a  power law 
 form for the spectrum of the fluctuations of the index of refraction.
% The  structure function over the scales of the inertial range then has a power law form 
% with an exponent or power determined by this theory. 
 In recent years, there has been 
 a shift of focus on non-Kolmogorov turbulence. 
 This is motivated in part  by the analysis of atmospheric temperature recordings
 which show  deviations from the Kolmogorov power spectrum. However,  these studies
 deal mostly with the case of light tails of the two-point statistics for the medium fluctuations, 
 which correspond
 to an integrable covariance function.    Here we consider beam wave propagation
 in  random media with long-range correlations, where the tails  of the 
  covariance function decay at a slower rate  and the medium contains
more features of  low spatial frequency.  
 We explicitly discuss the roles of the inner and 
 outer scales delineating the power law,  and contrast the results with those for the Kolmogorov turbulence. 
 
 A main result in the long-range case is that the randomization of the wave field is multiscale: 
 First, we show  that as the beam wave propagates through the medium, a strong random travel time
perturbation builds up. We present a precise characterization of the travel time perturbation, 
which corresponds to  a fractional Brownian motion, with Hurst index and amplitude determined 
by the statistics of the medium. Second, we show that if we 
observe the beam wave at large propagation distances where the travel time correction is  large
relative  to the pulse width, then the beam wave pulse shape itself is deformed and becomes
random due to scattering. 

Another important result is  a detailed characterization
of the decorrelation of the random beam wave both in space and frequency. This is carried out 
in the random travel time centered frame because otherwise, the  frequency decorrelation would be masked by the very large random phase associated with the travel time fluctuations.  
The analysis reveals a cusp like behavior for the spatial correlations of the wave field 
in the transverse coordinates, with the cusp shape depending on the rate of decay 
of the covariance of the medium fluctuations.   The scale of frequency decorrelation is also quantified and 
it is used to analyze the deformation of the probing pulse induced by scattering.

The results of our analysis  are important
for applications like imaging and communication through the atmosphere,
and also for propagation through the earth's crust or through the ocean.
 In the case of communication applications a characterization of the statistics of 
fading or strong pulse deformation is important in order to evaluate the efficiency of various 
communication  protocols.  
In imaging through complex media,  one needs  to take into account the geometric 
wavefront distortion that is caused by the random travel time as well as the deformation or blurring of 
the beam pulse shape. 
Quantitive insights about  these effects are useful when
designing schemes for clutter and turbulence compensation.  

\section*{Acknowledgements}
This material is based upon work supported by the Air Force Office of Scientific Research under award numbers FA9550-22-1-0077 and  FA9550-22-1-0176, 
by the U.S. Office of Naval Research under award number N00014-21-1-2370 and
by  the National Science Foundation under grant DMS-2010046.

\appendix
\section{Proof of Proposition \ref{prop.3}}
\label{ap:A}
Equation \eqref{eq:SPC1} written in the coordinates  \eqref{eq:coordC} is 
\begin{equation}
\partial_z \cC_\varOmega\Big(\obX + \frac{\tbX}{2},\obX - \frac{\tbX}{2},z \Big) = \Big[ \frac{i}{k(\varOmega)} \nabla_\obX \cdot \nabla_{\tbX} - \frac{k^2(\varOmega)}{4} 
\Theta(\tbX) \Big]\cC_\varOmega\Big(\obX + \frac{\tbX}{2},\obX - \frac{\tbX}{2},z \Big),
\label{eq:A1}
\end{equation}
and using the Fourier transform 
\begin{equation}
\hat \cW_\varOmega(\bq,\tbX,z) = \int_{\RR^2} d \obX \,  \cC_\varOmega\Big(\obX + \frac{\tbX}{2},\obX - \frac{\tbX}{2},z \Big) e^{- i \bq \cdot \obX},
\label{eq:A2}
\end{equation}
we get 
\begin{align}
\Big( \partial_z + \frac{\bq}{k(\varOmega)} \cdot \nabla_{\tbX} \Big) \hat \cW_\varOmega(\bq,\tbX,z) = - \frac{k^2(\varOmega)}{4} 
\Theta(\tbX) \hat \cW_\varOmega(\bq,\tbX,z), \label{eq:A3}
\end{align}
for $z > 0$, with initial condition $\hat \cW_\varOmega(\bq,\tbX,0) = \hat \cW_{\varOmega,0}(\bq,\tbX), $ defined in \eqref{eq:expressW2IC}. 

We can solve \eqref{eq:A3} by integration along the characteristic $\tbX = \tbX_0 + \bq z/k(\varOmega)$, starting from $\tbX_0$, using that 
\begin{align*}
\partial_z \hat \cW_\varOmega\Big(\bq,\tbX_0 + \frac{\bq}{k(\varOmega)}z,z\Big) &= \Big( \partial_z + \frac{\bq}{k(\varOmega)} \cdot \nabla_{\tbX} \Big)\cW_\varOmega\Big(\bq,\tbX_0 + \frac{\bq}{k(\varOmega)}z,z\Big) \\& = -  \frac{k^2(\varOmega)}{4} 
\Theta(\tbX)\cW_\varOmega\Big(\bq,\tbX_0 + \frac{\bq}{k(\varOmega)}z,z\Big), \quad z > 0.
\end{align*}
The result is 
\begin{align*}
\hat \cW_\varOmega\Big(\bq,\tbX_0 + \frac{\bq}{k(\varOmega)}z,z\Big) = \hat \cW_{\varOmega,0}(\bq,\tbX_0) \exp \left[-\frac{k^2(\varOmega)}{4} \int_0^z dz' \, \Theta \Big(\tbX_0 + 
\frac{\bq}{k(\varOmega)}z' \Big) \right],
\end{align*}
or, equivalently, in terms of $\tbX$, 
\begin{align*}
\hat \cW_\varOmega\Big(\bq,\tbX,z\Big) = \hat \cW_{\varOmega,0}\Big(\bq,\tbX -\frac{\bq}{k(\varOmega)}z \Big) \exp \left[-\frac{k^2(\varOmega)}{4} \int_0^z dz' \, \Theta \Big(\tbX - 
\frac{\bq}{k(\varOmega)}(z-z') \Big) \right].
\end{align*}
The result stated in Proposition \ref{prop.3} follows from this expression and the definition \eqref{eq:SPC3} of the Wigner transform,
\begin{align*}
\cW_\varOmega(\obX,\bkappa,z) &= \int_{\RR^2} d \tbX \, \cC_\varOmega\Big(\obX+\frac{\tbX}{2},\obX-\frac{\tbX}{2},z\Big) \exp(-i \bkappa \cdot \tbX) \\
&=
\int_{\RR^2} d \tbX \,  \int_{\RR^2} \frac{d \bq}{(2 \pi)^2} \hat \cW_\varOmega\big(\bq,\tbX,z\big) \exp \big(i \bq \cdot \obX - i \bkappa \cdot \tbX\big) \\
&= \frac{1}{(2 \pi)^2} \int_{\RR^2} d \bq \int_{\RR^2} d \tbX \, \, \hat  \cW_{\varOmega,0}\Big(\bq,\tbX -\frac{\bq}{k(\varOmega)}z \Big) \exp \big(i \bq \cdot \obX - i \bkappa \cdot \tbX\big) \\
& \qquad \qquad \qquad \times \exp\left[-\frac{k^2(\varOmega)}{4} \int_0^z dz' \, \Theta \Big(\tbX -
\frac{\bq}{k(\varOmega)}(z-z') \Big)\right].
\end{align*}
In \eqref{eq:expressW2} we used the change of variable  $\tbX' = \tbX -\frac{\bq}{k(\varOmega)}z $. $~~\Box$

\section{Proof of the expansion (\ref{eq:expandPhizeta})}
\label{app:C}
We first remark that
\[
|\by|^\alpha-|\bx|^\alpha = \mathfrak{C}_\alpha
\int_{\RR^2} d\bq  |\bq|^{-\alpha-2} \big( e^{i \bq\cdot\bx}-e^{i \bq \cdot \by}\big) , \]
with constant $\mathfrak{C}_\alpha$ defined by 
\[\mathfrak{C}_\alpha^{-1} = 2\pi \int_0^\infty \big(1-J_0(s) \big) s^{-1-\alpha} ds.\]
Next, we compute from (\ref{eq:defPhiAlpha}):
\begin{align}
\Phi_\alpha({\bf 0},\bzeta) -
\Phi_\alpha({\bf 0},{\bf 0}) 
= -
\Phi_{\alpha,1}(\bzeta) (1+o(1)), \label{eq:Phi1}
\end{align}
with
\begin{align*}
\Phi_{\alpha,1}(\bzeta) &=
\frac{1}{4(2\pi)^2} \int_{\RR^2} d\bu
e^{-\frac{1}{4}|\bu|^\alpha}
\int_0^1 ds \big( 
|\bzeta-(1+\alpha)^{1/\alpha} \bu s|^{\alpha}
-
|(1+\alpha)^{1/\alpha} \bu s|^{\alpha} \big)\\
&= \frac{\mathfrak{C}_\alpha}{4(2\pi)^2} \int_{\RR^2} d\bu\, 
e^{-\frac{1}{4}|\bu|^\alpha}
\int_0^1 ds \int_{\RR^2}d\bq \, 
|\bq|^{-\alpha-2} 
e^{i (1+\alpha)^{1/\alpha} \bu \cdot\bq s}
\big(1-e^{- i \bzeta\cdot\bq}\big) \\
&= \frac{\mathfrak{C}_\alpha}{4} \int_0^\infty d\eta \, 
\eta e^{-\frac{1}{4}\eta^\alpha}
\int_0^1 ds \int_0^\infty dq \, 
q^{-\alpha-1} 
J_0\big( (1+\alpha)^{1/\alpha} \eta q s \big)
\big(1-J_0(|\bzeta|q) \big) \\
&= \frac{\mathfrak{C}_\alpha}{4  (1+\alpha)^{1/\alpha} } \int_0^\infty d\eta
e^{-\frac{1}{4}\eta^\alpha}
\int_0^\infty dq \, 
q^{-\alpha-2} 
{\cal J}_0\big( (1+\alpha)^{1/\alpha} \eta q \big)
\big(1-J_0(|\bzeta| q) \big)  ,
\end{align*}
where ${\cal J}_o(s)=\int_0^s J_0(s') ds'$ is the antiderivative of the Bessel function $J_0$.  It is a bounded function that converges to one as $s\to+\infty$.
By the change of variable $s=|\bzeta| q$, we get
\begin{align*}
\Phi_{\alpha,1}(\bzeta)
= \frac{C_\alpha |\bzeta|^{\alpha+1} }{4  (1+\alpha)^{1/\alpha} } \int_0^\infty d\eta
e^{-\frac{1}{4}\eta^\alpha}
 \int_0^\infty ds\, 
s^{-\alpha-2} 
{\cal J}_0\left( \frac{(1+\alpha)^{1/\alpha} \eta s}{|\bzeta|} \right)
\big(1-J_0(s) \big) .
\end{align*}
Using the dominated convergence theorem, we find
$$
\frac{\Phi_{\alpha,1}(\bzeta) }{|\bzeta|^{\alpha+1}} \stackrel{|\bzeta|\to0}{\longrightarrow}
 \frac{C_\alpha  }{4  (1+\alpha)^{1/\alpha} } \int_0^\infty d\eta \, 
e^{-\frac{1}{4}\eta^\alpha}
 \int_0^\infty ds \, 
s^{-\alpha-2} 
\big(1-J_0(s) \big).
$$
Therefore, equation \eqref{eq:Phi1} gives the expansion
$$
\Phi_\alpha({\bf 0},\bzeta) =
\Phi_\alpha({\bf 0},{\bf 0}) 
\big( 1 -r_\alpha |\bzeta|^{\alpha+1}+o(|\bzeta|^{\alpha+1})\big),
$$
with
$$
r_\alpha=   \frac{  \int_0^\infty d\eta \, 
e^{-\frac{1}{4}\eta^\alpha} 
 \int_0^\infty ds \, 
s^{-\alpha-2} 
\big(1-J_0(s) \big)ds}{8\pi   (1+\alpha)^{1/\alpha} \Phi_\alpha({\bf 0},{\bf 0}) \int_0^\infty s^{-\alpha-1}  \big(1-J_0(s) \big)  }.
$$
The desired result follows once we use
$$
\Phi_\alpha({\bf 0},{\bf 0}) = \frac{1}{2\pi} \int_0^\infty d \eta \, \eta e^{-\frac{1}{4}\eta^\alpha}   ,
$$
and the identities
\begin{align*}
\int_0^\infty ds \, s^{-\alpha-1} \big(1-J_0(s) \big)  &= 
\frac{2^{-\alpha}}{\alpha} \frac{\Gamma(1-\alpha/2)}{\Gamma(1+\alpha/2)}, \\
 \int_0^\infty ds \, 
s^{-\alpha-2} 
\big(1-J_0(s) \big) 
&=\frac{2^{-\alpha-1}}{\alpha+1}  \frac{\Gamma(1/2-\alpha/2)}{\Gamma(3/2+\alpha/2)}, \\
\int_0^\infty  d \eta \, \eta
e^{-\frac{1}{4}\eta^\alpha}
&= \frac{2^{4/\alpha}}{\alpha} \Gamma(\frac{2}{\alpha}), \\
\int_0^\infty d\eta \, 
e^{-\frac{1}{4}\eta^\alpha} 
&=  \frac{2^{2/\alpha}}{\alpha} \Gamma(\frac{1}{\alpha}).
\end{align*}

\section{Proof of Proposition \ref{prop.4}}
\label{ap:B}
Let us  introduce the reference wavenumber $k$ and
use it to change coordinates in the cross-range plane, as follows
\begin{equation}
\bX_1 = \sqrt{\frac{k}{k_1}} \Big(\obX + \frac{\tbX}{2}\Big), \qquad 
\bX_2 = \sqrt{\frac{k}{k_2}} \Big(\obX - \frac{\tbX}{2}\Big).
\label{eq:B1}
\end{equation}
Writing the evolution equation \eqref{eq:CF4} in these coordinates and then taking the Fourier transform in $\tbX$, which defines  the  Wigner transform 
\begin{equation*}
\cW(\varOmega_1,\varOmega_2,\obX,\bkappa,z) = \int_{\RR^2} d \tbX \, 
 \cC \Big(\varOmega_1,\varOmega_2, \sqrt{\frac{k}{k_1}} \Big(\obX + \frac{\tbX}{2}\Big),\sqrt{\frac{k}{k_2}} \Big(\obX - \frac{\tbX}{2}\Big),z \Big) e^{-i \bkappa \cdot \tbX} ,
\end{equation*}
we obtain the following equation 
\begin{align}
    &\Big( \partial_z  +\frac{1}{k} \bkappa \cdot \nabla_\obX \Big) {\cal W}(\varOmega_1,\varOmega_2,\obX,\bkappa,z) = -
    \frac{1}{4(2\pi)^2}\int_{\RR^2} d\bq \, \hat{\Theta}(\bq) \nonumber 
    \\& \quad 
    \times \Big\{
    k_1 k_2 {\cal W}\Big(\varOmega_1,\varOmega_2,\obX,\bkappa - \frac{\bq}{2} \Big( \sqrt{\frac{k}{k_1}}+\sqrt{\frac{k}{k_2}}\Big),z
    \Big) e^{i \obX \cdot \bq  \left(\sqrt{\frac{k}{k_1}}-\sqrt{\frac{k}{k_2}}\right)} \nonumber 
       \\& \quad 
    \quad +k_1(k_1-k_2) {\cal W}\Big(\varOmega_1,\varOmega_2,\obX,\bkappa - \frac{\bq}{2} \sqrt{\frac{k}{k_1}},z
    \Big) e^{i \obX \cdot \bq  \sqrt{\frac{k}{k_1}}} \nonumber \\
    &\quad \quad -
     k_2(k_1-k_2) {\cal W}\Big(\varOmega_1,\varOmega_2,\obX,\bkappa + \frac{\bq}{2} \sqrt{\frac{k}{k_2}},z
    \Big) e^{i \obX \cdot \bq  \sqrt{\frac{k}{k_2}}}
    \Big\} , \label{eq:B}
\end{align}
for $z >0$, where the net effect of the random medium is in the Fourier transform $\hat \Theta$ of the function $\Theta$ defined in (\ref{def:Theta}).

Although we are interested in an infinite outer scale, let us consider a modification of \eqref{def:Theta}, corresponding to a finite $L_o$, 
\begin{equation*}
\Theta_{_{L_o}} (\bX) = \frac{\chi_\alpha}{2 \pi} \int_{L_o^{-1}}^{l_o^{-1}} d \kappa \, \left[ 1- J_0(\kappa |\bX|) \right] \kappa^{-1 - \alpha} = 
\Theta(\bX) + O\Big(\frac{\chi_\alpha |\bX|^2}{ L_o^{2-\alpha}}\Big) \stackrel{L_o \to \infty}{\longrightarrow} \Theta(\bX).
\end{equation*}
The  Fourier transform of this function is 
\begin{align*}
\hat \Theta_{_{L_o}}(\bq) &= \int_{\RR^2} d \bX \, \Theta_{_{L_o}} (\bX) e^{-i \bq \cdot \bX} = 2 \pi \chi_\alpha \frac{(L_o^{\alpha} - l_o^\alpha)}{\alpha} \delta (\bq) - \chi_\alpha |\bq|^{-2 - \alpha}{\bf 1}_{(L_o^{-1},l_o^{-1})}(|\bq|)
\end{align*}
and  we explain next that   equation  \eqref{eq:B} makes sense for $L_o \to \infty$.  Using the observation 
\[
\int_{\RR^2} d \bq \, {\bf 1}_{(L_o^{-1},l_o^{-1})}(|\bq|) |\bq|^{-2 - \alpha} = 2 \pi \int_0^\infty dq \, {\bf 1}_{(L_o^{-1},l_o^{-1})}(q) q^{-1 - \alpha} = \frac{2 \pi (L_o^\alpha - l_o^\alpha)}{\alpha},
\]
we can rewrite \eqref{eq:B}, with $\hat \Theta$ replaced by $\hat \Theta_{_{L_o}}$ and therefore $\cW$ replaced by $\cW_{_{L_o}}$ as follows
\begin{align}
    &\hspace{-0.04in}\Big( \partial_z  +\frac{1}{k} \bkappa \cdot \nabla_\obX\Big) {\cal W}_{_{L_o}}(\varOmega_1,\varOmega_2,\obX,\bkappa,z) = 
    \frac{\chi_\alpha}{4(2\pi)^2}\int_{\RR^2} d\bq \, {\bf 1}_{(L_o^{-1},l_o^{-1})}(|\bq|) |\bq|^{-2-\alpha}  \Big\{
    k_1 k_2  \nonumber 
    \\& 
    \times \Big[\cW_{_{L_o}}\Big(\varOmega_1,\varOmega_2,\obX,\bkappa - \frac{\bq}{2} \Big( \sqrt{\frac{k}{k_1}}+\sqrt{\frac{k}{k_2}}\Big),z
    \Big) e^{i \obX \cdot \bq  \left(\sqrt{\frac{k}{k_1}}-\sqrt{\frac{k}{k_2}}\right)} - \cW_{_{L_o}}(\varOmega_1,\varOmega_2,\obX,\bkappa,z)\Big]\nonumber 
       \\& 
    +k_1(k_1-k_2) \Big[{\cal W}_{_{L_o}}\Big(\varOmega_1,\varOmega_2,\obX,\bkappa - \frac{\bq}{2} \sqrt{\frac{k}{k_1}},z
    \Big) e^{i \obX \cdot \bq  \sqrt{\frac{k}{k_1}}} - \cW_{_{L_o}}(\varOmega_1,\varOmega_2,\obX,\bkappa,z)\Big]\nonumber \\
    &
    -
     k_2(k_1-k_2) \Big[{\cal W}_{_{L_o}}\Big(\varOmega_1,\varOmega_2,\obX,\bkappa + \frac{\bq}{2} \sqrt{\frac{k}{k_2}},z
    \Big) e^{i \obX \cdot \bq  \sqrt{\frac{k}{k_2}}} - \cW_{_{L_o}}(\varOmega_1,\varOmega_2,\obX,\bkappa,z)\Big]
    \Big\} . \label{eq:B3}
    \end{align}
At $|\bq| \sim L_{o}^{-1} \to 0$ the square brackets in this expression are $O(|\bq|)$, and after writing the 
$\bq$ integral in polar coordinates we conclude that the integrand is $O(|\bq|^{-\alpha})$. Thus, after the integration in $|\bq|$  the right-hand side depends on the outer scale as $L_o^{-(1-\alpha)}$. This vanishes as $L_o \to \infty$, so we can take the limit in~\eqref{eq:B3}  and replace $\cW_{_{L_o}}$ by $\cW$. 

Since the integrand in \eqref{eq:B3}  has a fast decay at $|\bq| \to \infty$, like  $|\bq|^{-1-\alpha}$, and we are interested in a small inner scale 
(recall section \ref{sect:CovSpatial}), we can approximate $\cW$ by taking the limit $l_o \to 0$. We obtain the equation \begin{align}
    &\Big( \partial_z  +\frac{1}{k} \bkappa \cdot \nabla_\obX\Big) {\cal W}(\varOmega_1,\varOmega_2,\obX,\bkappa,z) = 
    \frac{\chi_\alpha}{4(2 \pi)^2}\int_{\RR^2} d\bq |\bq|^{-2-\alpha}  \Big\{
    k_1 k_2  \nonumber 
    \\& ~ ~
    \times \Big[\cW\Big(\varOmega_1,\varOmega_2,\obX,\bkappa - \frac{\bq}{2} \Big( \sqrt{\frac{k}{k_1}}+\sqrt{\frac{k}{k_2}}\Big),z
    \Big) e^{i \obX \cdot \bq  \left(\sqrt{\frac{k}{k_1}}-\sqrt{\frac{k}{k_2}}\right)} - \cW(\varOmega_1,\varOmega_2,\obX,\bkappa,z)\Big]\nonumber 
       \\& 
    ~~ +k_1(k_1-k_2) \Big[{\cal W}\Big(\varOmega_1,\varOmega_2,\obX,\bkappa - \frac{\bq}{2} \sqrt{\frac{k}{k_1}},z
    \Big) e^{i \obX \cdot \bq  \sqrt{\frac{k}{k_1}}} - \cW(\varOmega_1,\varOmega_2,\obX,\bkappa,z)\Big]\nonumber \\
    &~~-
     k_2(k_1-k_2) \Big[{\cal W}\Big(\varOmega_1,\varOmega_2,\obX,\bkappa + \frac{\bq}{2} \sqrt{\frac{k}{k_2}},z
    \Big) e^{i \obX \cdot \bq  \sqrt{\frac{k}{k_2}}} - \cW(\varOmega_1,\varOmega_2,\obX,\bkappa,z)\Big]
    \Big\} ,
\end{align}
for $z > 0$, with  the initial condition
\begin{equation}
\label{eq:inicW}
\cW(\varOmega_1,\varOmega_2, \obX,\bkappa,0) = \int_{\RR^2} d \tbX \, \hat F \Big(\varOmega_1, \sqrt{\frac{k}{k_1}} \big(\obX+ \frac{\tbX}{2}\big) \Big) 
\overline{\hat F \Big(\varOmega_2, \sqrt{\frac{k}{k_2}} \big(\obX-\frac{\tbX}{2}\big) \Big) } e^{- i \bkappa \cdot \tbX}.
\end{equation}

Let us consider a range $Z$ in the strong fluctuation medium (\ref{eq:SMF})
so that we have $ Q_Z R_Z \gg 1$ with
\begin{equation}
Q_Z = Q(Z) =(d_\alpha k^2 Z)^{1/\alpha}, \qquad R_Z = R(Z) = \left(\frac{d_\alpha k^{2-\alpha} Z^{\alpha+1}}{\alpha+1}\right)^{1/\alpha}.
\label{eq:defQZRZ}
\end{equation}
 We now show that the decoherence frequency i.e., the scale of decay
 %\footnote{\bc{We are not showing the decay in $|k_1-k_2|$. Can we even prove that we have decay to $0$ for $|\tilde k_1-\tilde k_2| \to \infty$? I think we are only showing that  frequency offsets of order $K_Z$ play a role.}}
  of $\cW$ with respect to $|\varOmega_1-\varOmega_2|$  is $c_o K_Z$, where
 \begin{equation}
K_Z = \frac{2 k}{Q_Z R_Z} \ll k.
\label{eq:B5}
\end{equation}
Indeed, suppose that 
\begin{equation}
k_j = k(\varOmega_j) = k + K_Z \tilde k_j, \qquad j = 1,2,
\label{eq:B6}
\end{equation}
where $\tilde k_j$ are dimensionless $O(1)$ scaled wavenumber offsets with respect to $k$. Then,
\[
\frac{(k_1+k_2)}{2} \simeq k \Big[1  + O\Big(\frac{1}{Q_Z R_Z}\Big) \Big],
\]
and 
\[
k_1 - k_2 = K_Z (\tilde k_1 - \tilde k_2) = O(K_Z)  \ll k.
\]
Introduce also the dimensionless variables 
\begin{equation}
\tilde{\obX} = \frac{\obX}{R_Z}, \quad \tilde{\bq} = \frac{\bq}{Q_Z}, \quad \tilde \bkappa = \frac{\bkappa}{Q_Z}, \quad \tilde z = \frac{z}{Z}.
\label{eq:B7}
\end{equation}
Then, the Wigner transform can be approximated by 
\begin{equation}
\cW(\varOmega_1,\varOmega_2, \obX,\bkappa,z) \approx \frac{(2 \pi)^2 
  \cF(\varOmega_1,\varOmega_2)  }{(R_Z Q_Z)^2} 
\tilde \cW (\tilde k_1-\tilde k_2, \tilde{\obX},\tilde{\bkappa},\tilde z),
\label{eq:B8}
\end{equation}
with  $\cF$ defined in \eqref{eq:expressW2IC2f} and the function $\tilde \cW$ of dimensionless $O(1)$ arguments satisfying equation 
(\ref{eq:B11}), with  initial condition 
(\ref{eq:B12}).

To derive \eqref{eq:B11} we used definitions \eqref{eq:defQZRZ} and \eqref{def:Gamma}, which give 
\begin{equation}
\partial_z + \frac{1}{k} \bkappa \cdot \nabla_{\obX} = \frac{1}{Z} \Big( \partial_{\tilde z} + \frac{Z Q_Z}{k R_Z}  \tilde \bkappa \cdot \nabla_{\tilde{\obX}} \Big)
=  \frac{1}{Z} \left( \partial_{\tilde z} +  (1+\alpha)^{1/\alpha} \tilde \bkappa \cdot \nabla_{\tilde{\obX}} \right),
\label{eq:B15}
\end{equation}
and 
\begin{equation}
Z \chi_\alpha k^2 Q_Z^{-\alpha} = \frac{\chi_\alpha}{d_\alpha} = \frac{2^{\alpha+1} \pi  \alpha \Gamma(1+\alpha/2)}{\Gamma(1-\alpha/2)}. 
\label{eq:B16}
\end{equation}
We also used  (\ref{eq:B5}-\ref{eq:B6}) and neglected the small, $O\big((\tilde k_1-\tilde k_2)K_Z / k\big)$ residual.

To justify the initial condition \eqref{eq:B12} we note first %from equations \eqref{eq:defSource}--\eqref{eq:FS4} 
that 
in the regime defined by  (\ref{eq:B5}-\ref{eq:B6}) 
we have 
\begin{align}
\nonumber 
 \cW(\varOmega_1,\varOmega_2,\obX,\bkappa,0)    \approx &   \frac{r_{\rm s}^2}{B^2} 
\tilde{\cW}_{\rm s} \Big(\frac{\tilde{\obX}}{r_{\rm s}/R_Z}, \frac{\tilde \bkappa}{1/(r_{\rm s} Q_Z)} \Big)  \left[
  \hat f\Big(\frac{\varOmega_1-\omega_o}{B} \Big) + \hat f\Big(\frac{\varOmega_1+\omega_o}{B} \Big)   \right] \\ 
  &  
\times 
 \left[ \overline{ \hat f\Big(\frac{\varOmega_2-\omega_o}{B} \Big) + \hat f\Big(\frac{\varOmega_2+\omega_o}{B} \Big) }  \right] 
,  
\label{eq:B13}
\end{align} 
where 
\begin{align}
\tilde{\cW}_{\rm s} (\tilde{\obX} , \tilde \bkappa ) = \int_{\RR^2}  d \bxi \, S\big(\tilde{\obX} + \frac{\bxi}{2} \big) \overline{S\big(  \tilde{\obX} - \frac{\bxi}{2} \big) }
e^{- i  \tilde \bkappa \cdot \bxi} \label{eq:B14}
\end{align}
%\begin{align}
%W_{\rm s} \Big(\frac{\tilde{\obX}}{r_{\rm s}/R_Z}, \frac{\tilde \bkappa}{1/(r_{\rm s} Q_Z)} \Big) = \int_{\RR^2}  d \bxi S\Big( \frac{\tilde{\obX}}{r_{\rm s}/ R_Z} + \frac{\bxi}{2} \Big) \overline{S\Big( \frac{\tilde{\obX}}{r_{\rm s} /R_Z} - \frac{\bxi}{2} \Big) }
%e^{- i r_{\rm s} Q_Z \tilde \bkappa \cdot \bxi} \label{eq:B14}
%\end{align}
is the dimensionless Wigner transform of the source function $S$. Since $r_{\rm s}/R_Z \ll 1$ and $1/(r_{\rm s} Q_Z) \ll 1$ by \eqref{eq:SMF} and \eqref{eq:defQZRZ},
we conclude from (\ref{eq:B13}-\ref{eq:B14})  that the initial condition is supported at $\tilde \bX \approx {\bf 0}$ and $\tilde \bkappa \approx {\bf 0}$.
This is why we use the Dirac delta in \eqref{eq:B12}. The normalization in \eqref{eq:B8} comes from the identity
\begin{align}
& \int_{\RR^2} d \tilde{\obX} \int_{\RR^2} d \tilde \bkappa \, \cW(\varOmega_1,\varOmega_2, R_Z \tilde{\obX}, Q_Z \tilde \bkappa,0) 
 = \frac{(2 \pi)^2}{(R_Z Q_Z)^2}  \cF(\varOmega_1,\varOmega_2),
\end{align} 
derived from (\ref{eq:B13}-\ref{eq:B14}), with $\cF$ defined in \eqref{eq:expressW2IC2f}. 

%-----------------------------------------------
%\newpage

\bibliography{biblio} \bibliographystyle{siam}
\end{document}